\newtheorem{theorem}{Theorem}
\newtheorem{lemma}[theorem]{Lemma}
\newtheorem{corollary}[theorem]{Corollary}
\newtheorem{proposition}[theorem]{Proposition}
\newtheorem{prop}[theorem]{Proposition}
\newtheorem{fact}{Fact}
\newtheorem{obs}{Observation}
\newtheorem{case}{Case}
\newtheorem{remark}[theorem]{Remark}
\newcommand{\ignore}[1]{{}}
\newcommand{\defi}[1]{\emph{#1}}
\newcommand{\HC}{\operatorname{HC}}
\newcommand{\HCs}{\operatorname{HC}^*}
\newcommand{\hc}[1]{\operatorname{hc}(#1)}
\newcommand{\hcl}[1]{\operatorname{hc}_{{L}}(#1)}
\newcommand{\BG}{\operatorname{BG}}
\newcommand{\MG}{\ensuremath{M_G}}
\newcommand{\MGe}{\ensuremath{M_{G'}}}
\newcommand{\MGne}{\ensuremath{M_{G''}}}
\newcommand{\BGM}{\operatorname{BG}(M)}
\newcommand{\BMe}{\operatorname{BG}(M')}
\newcommand{\BMne}{\operatorname{BG}(M'')}
\newcommand{\BGG}{\operatorname{BG}(M_{G})}
\newcommand{\BGe}{\operatorname{BG}(M_{G'})}
\newcommand{\BGne}{\operatorname{BG}(M_{G''})}
\newcommand{\MQ}{\ensuremath{M[Q]}}
\newcommand{\BGMQ}{\operatorname{BG}(M[Q])}
\newcommand{\sfac}[1]{\operatorname{sf}(#1)}
\newcommand{\bases}{\mathcal{B}}
\newcommand{\Ba}{\ensuremath{B_1}}
\newcommand{\Bb}{\ensuremath{B_2}}
\newcommand{\Bc}{\ensuremath{B_3}}
\newcommand{\Bd}{\ensuremath{B_4}}
\newcommand{\GC}{\ensuremath{C_e}}
\newcommand{\SGC}{\ensuremath{\mathcal{C}_e}}
\newcommand{\calC}{\mathcal{C}}
\newcommand{\calP}{\mathcal{P}}
\newcommand{\north}{\ensuremath{\mathit{N}}}
\newcommand{\east}{\ensuremath{\mathit{E}}}
\title{Matroid basis graph: Counting Hamiltonian cycles
	\thanks{
          Research partially supported by  
          CNPq (Proc.~308523/2012-1 and~456792/2014-7), 
          FAPESP (Proc.~2012/24597-3, Proc.~2013/03447-6, and
          Proc.~2015/10323-7), 
          PICS (Grant PICS06316), and Project MaCLinC of NUMEC/USP.}
	}
\date{\empty}
\author{
	Cristina G.~Fernandes
	\thanks{Instituto de Matem\'atica e Estat\'istica, Universidade de S\~ao Paulo, 05508-090 S\~ao Paulo, Brazil.
	(\texttt{cris@ime.usp.br}, \texttt{coelho@ime.usp.br}).} 	 
\and 
	C\'esar Hern\'andez-V\'elez
	\thanks{Facultad de Ciencias, Universidad Aut\'onoma de San Luis Potos\'i, 78290 San Luis Potos\'i, Mexico. (\texttt{cesar.velez@uaslp.mx}).} 
\and 
	Jos\'e C.~de~Pina\footnotemark[2]
\and 
	Jorge~Luis Ram\'irez~Alfons\'in
	\thanks{Institut Montpelli\'erain Alexander Grothendieck, Universit\'e de Montpellier, 34095 Montpellier CEDEX~5, France. (\texttt{jorge.ramirez-alfonsin@umontpellier.fr}).}
}
\begin{document}

\maketitle

\begin{abstract}
  We present exponential and
  super factorial lower bounds on the number of Hamiltonian
  cycles passing through any edge of the basis graphs of a graphic,
  generalized Catalan and uniform matroids.
  All lower bounds were obtained by a common
  general strategy based on counting appropriated cycles of
  length four in the corresponding matroid basis graph.
\end{abstract}



\section{Introduction}\label{sec:intro}

For general background in matroid theory, we refer the reader to Oxley~\cite{Ox11} and Welsh~\cite{We76}.  
A \emph{matroid} ${M = (E,\bases)}$ of \emph{rank} $r=r(M)$ is a finite set $E$ together with a nonempty 
collection $\bases=\bases(M)$ of $r$-subsets of $E$, called the \emph{bases} of~$M$,
satisfying the following \emph{basis exchange axiom}:
\begin{enumerate}[label=(\textit{BEA})]
\item\label{it:bea} If $\Ba$ and $\Bb$ are members of $\bases$ and $e\in \Ba\setminus \Bb$, \\%
then there is an element $g\in \Bb\setminus \Ba$ such that $(\Ba-e)+g \in \bases$.
\end{enumerate}

The \emph{basis graph $\BG(M)$ of a matroid $M$} is the graph having
as vertex set the bases of $M$ and two vertices (bases) $\Ba$ and
$\Bb$ are adjacent if and only if the symmetric difference $\Ba \Delta
\Bb$ of $\Ba$ and $\Bb$ has cardinality two.  A graph is a \emph{basis
  graph} if it can be labeled to become the basis graph of some
matroid.  We make no distinction between a basis of $M$ and a vertex
of $\BG(M)$.

Basis graphs have been extensively studied.
Maurer~\cite{Ma73a} gave a complete characterization of
those graphs that are basis graphs.
Liu~\cite{Liu84,Liu88a,Liu88b} investigated the connectivity of~$\BG(M)$ and Donald, Holzmann, and Tobey~\cite{DHT77} gave a characterization of basis graphs of uniform matroids.
Basis graphs are closely related to \emph{matroid basis polytopes}.
Indeed, Gel$'$fand and Serganova~\cite{GeSe87} proved that $\BG(M)$ is the $1$-skeleton of the \emph{basis polytope} of~$M$.  
We refer the reader to the work developed by Chatelain and Ram{\'{\i}}rez~Alfons{\'{\i}}n~\cite{ChRa11,ChRa14} for further discussion and applications on this direction.

A graph $G$ is \emph{edge Hamiltonian} if $G$ has order at least three and every edge is in a Hamiltonian cycle.
According to Bondy and Ingleton~\cite{BoIn76}, Haff (unpublished) showed that the basis graph $\BG(M)$ of every matroid $M$ is edge
Hamiltonian, unless $\BG(M)$ is $K_1$ or $K_2$, generalizing a result due to Cummins~\cite{Cu66} and Shank~\cite{Sh68}
for graphic matroids.
So, if $\BG(M)$ has at least three vertices, then $\BG(M)$ is edge Hamiltonian. 
In fact, the work of Bondy and Ingleton~\cite[Theorem~1 and Theorem~2]{BoIn76} 
about pancyclic graphs implies the edge Hamiltonicity proved by Haff.

In this paper, we investigate further the edge Hamiltonicity of~$\BG(M)$ by defining the following function.  
For a given matroid $M$, we let 
$$\HC^*(M)=\min\{\HC_e(M) : e \in E(\BG(M)) \}$$
where $\HC_e(M)$ denotes the number of different Hamiltonian cycles in $\BG(M)$ containing edge $e\in E(\BG(M))$. 
The function $\HC^*(M)$ naturally extends the edge Hamiltonicity.
Bondy and Ingleton state that $\HC^*(M)\ge 1$ for every matroid $M$.


Along this paper, when we refer that an edge $e$ is in $t$
Hamiltonian cycles, we mean that $e$ is in \emph{at least} $t$
different Hamiltonian cycles.

In Section~\ref{sec:graphicmatroids},
we give lower bounds on $\HCs(\MG)$ 
where $\MG$ is the cycle matroid
obtained from a $k$-edge-connected graph $G$.
The lower bound for $k=2,3$ is exponential
on the number of vertices of $G$ (Theorems~\ref{thm:2-conn}
and~\ref{thm:hc(n,3)}).
For $k \ge 4$, the lower bound is superfactorial
on $k$ and is exponetial on the number of vertices
(Theorem~\ref{thm:hc(n,k)}).
In Section~\ref{sec:latticepathmatroids}, we investigate
$\HC^*(M)$ when $M$ is in the class of \emph{lattice path
  matroids}.
We present a lower bound on~$\HC_e(M)$ when~$M$
is a \emph{generalized
  Catalan matroid} (Theorem~\ref{thm:latticepath}).
In particular, the derived lower bound for the
\emph{$k$-Catalan matroid} is
superfactorial on $k$.
Finally, 
we present a lower bound on $\HC^*(M)$ when $M$ is a
\emph{uniform matroid} (Theorem~\ref{thm:uniform}).


\subsection{General strategy}\label{sec:strategy}

In order to give a lower bound on $\HCs(\MG)$, we follow the strategy described below, which has the same spirit as the one used by Bondy and Ingleton~\cite{BoIn76}.

Let $M$ be a matroid and $\BGM$ be its basis graph.
Let $\Ba$ and $\Bb$ be adjacent vertices (bases) in $\BGM$.
By~\ref{it:bea}, there exist elements $e$ and $g$ of $M$, 
with ${e \in \Ba \setminus \Bb}$ and $g \in \Bb \setminus \Ba$, such that $\Bb = \Ba - e + g$.
We define an $(X, Y)$-bipartition (determined by $e$) of the bases of $M$, with
$X = \{ B \in \bases(M) \colon\, e \in B \}$ and ${Y = \{B \in \bases(M) \colon\, e \not\in B \}}$. 
The bases in $X$ ($Y$, respectively) correspond exactly to the bases of the matroid $M' = M / e$ obtained by \defi{contracting} $e$ ($M'' = M \setminus e$, obtained by \defi{deleting} $e$, respectively).
Moreover, $\BMe$ ($\BMne$, respectively) is $\BGM[X]$ ($\BGM[Y]$, respectively), which is the subgraph of $\BGM$ induced by $X$ ($Y$, respectively).
Therefore, there is a $1{-}1$ correspondence between  Hamiltonian cycles of $\BMe$ ($\BMne$, respectively) and  Hamiltonian cycles of $\BGM[X]$ ($\BGM[Y]$, respectively).
For readability, we do not distinguish between $\BMe$ ($\BMne$, respectively) and $\BGM[X]$ ($\BGM[Y]$, respectively).

A basis sequence $\Ba\Bb\Bc\Bd$ is a \emph{good cycle} for $\Ba\Bb$ if
it is a cycle (of length four) in $\BGM$, each of $\Ba$ and $\Bd$
contains $e$, and none of~$\Bb$ and~$\Bc$ contains $e$; that is, $\Ba$
and~$\Bd$ are adjacent bases of~$\BMe$ and $\Bb$ and~$\Bc$ are
adjacent bases of~$\BMne$ (Figure~\ref{fig:key}).

\begin{figure}[h]
	\centering
	\includegraphics[width=0.4\linewidth]{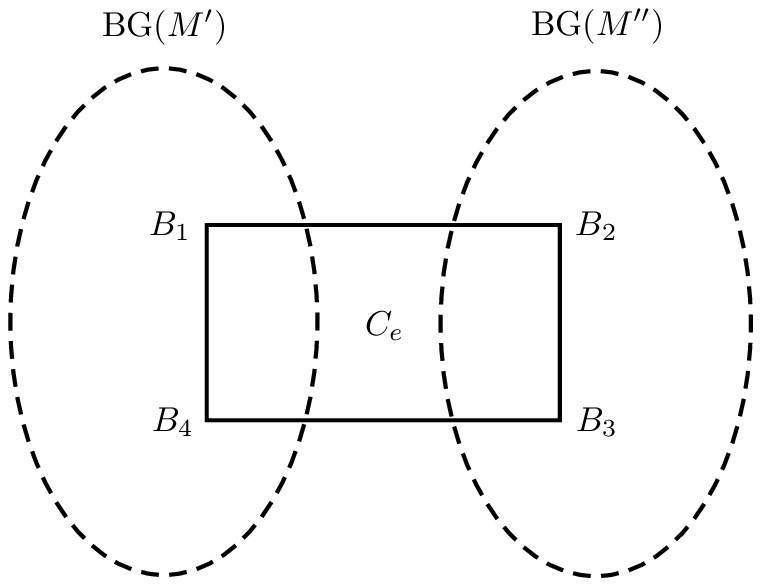}
	\caption{A good cycle $\GC=\Ba\Bb\Bc\Bd$ for $\Ba\Bb$.}
	\label{fig:key}
\end{figure}

If $\GC=\Ba\Bb\Bc\Bd$ is good, then the symmetric difference of
a Hamiltonian cycle of $\BMe$ passing through the edge $\Ba\Bd$,  
the good $\GC$, and a Hamiltonian cycle of~$\BMne$ passing through
the edge $\Bb\Bc$ is a Hamiltonian cycle of $\BGM$.

So, if $\calC(\Ba,\Bb)$ is the set of good cycles for $\Ba\Bb$, then 
\[\HC_{\Ba\Bb}(M) \ge \HCs(M') \times |\calC(\Ba,\Bb)| \times \HCs(M'').\]
This inequality suggests an inductive way to achieve a lower bound on $\HCs(M)$.  
A key part in this approach involves proving a lower bound on the number of good cycles for any edge of $\BGM$.


\section{Graphic matroids}\label{sec:graphicmatroids}

In this section, we consider a graphic matroid $M_G$ where~$G$ is a
$k$-edge-connected graph of order $n$; that is, the elements of the
ground set of~$M_G$ are the edges of~$G$ and a basis of~$M_G$
corresponds to a spanning tree of~$G$, thus a basis of~$M_G$ contains
exactly~$n-1$ edges of~$G$.  Since loops of~$G$ are in no basis
of~$M_G$, we always consider graphs with no loops.
For readability, we
do not distinguish between a basis of~$M_G$ and a spanning tree
of~$G$.
If~$B$ is a basis of~$M_G$ and~$g$ is an edge of~$G$ not in~$B$,
then~$B+g$ induces a unique cycle (circuit) $C(g,B)$ in~$G$ (in $M_G$,
respectively) called the \emph{fundamental cycle}
(\emph{circuit}, respectively) with respect to~$g$
and $B$~\cite{Ox11}.

First, note that, by Haff's result,
if~$G$ is a $k$-edge-connected graph of order
$n \ge 3$, for $k \ge 2$, then the graph~$\BG(M_G)$
has at least three vertices and is edge Hamiltonian. 

Let~$G' = G/e$ be the graph resulting from contracting the edge $e$ of $G$ and then removing loops and 
let $G'' = G\setminus e$ be the graph resulting from deleting the edge $e$.


Let $X$ and $Y$ be disjoint subsets of the vertex set $V(G)$.  We
denote by $E[X,Y]$ ($= E[Y,X]$) the set of edges of $G$ with one end
in $X$ and the other end in $Y$, and by~$e(X,Y)$ their number.


\subsection{General structure of good cycles}\label{sec:goodcycles}

Now, we fix the structure that we will use in the rest of Section~\ref{sec:graphicmatroids} and, 
unless otherwise stated, we will follow this notation.
The facts presented ahead show types of good cycles that this structure induces.

Let~$G$ be a graph and~$\Ba$ and $\Bb$ be bases of~$M_G$ such that $\Bb = \Ba -e +g$.
Let $f$ be an edge of~$\Ba-e$.
Let~$X$ be the vertex set of the component of $\Ba-e$ that contains no end of~$f$.
Let $Z$ be the vertex set of the component of $\Ba-f$ that contains no end of~$e$.
Let $Y = V(G) \setminus (X \cup Z)$.

Let $\SGC = \calC(\Ba,\Bb)$ be the set of good cycles for $\Ba\Bb$.  
An arbitrary element of~$\SGC$ is denoted by $\GC$, and is represented as $\Ba\Bb\Bc\Bd$. 
For ${f \in \Ba-e = \Bb-g}$, let~$\SGC(f) = \{\GC \in \SGC : f \not\in \Bd\}$. 
An arbitrary element of $\SGC(f)$ is denoted by $\GC(f)$. 
For every $f' \in \Ba - e$ with $f'\neq f$, since~$f'$ belongs to both $\Bc$ and~$\Bd$ for every cycle~$\GC(f)$, 
we have that ${\SGC(f) \cap \SGC(f') = \emptyset}$.  Thus $\SGC = \dot{\bigcup} \{\SGC(f) : f \in \Ba - e\}$.
For every $w \not\in \Ba + g = \Bb + e$, we denote by $\SGC(f,w)$ the set of cycles in $\SGC(f)$ such that~$w \in \Bc$. 
Similarly, ${\SGC(f,w) \cap \SGC(f,w') = \emptyset}$ for every $w' \not\in \Ba + g$ with $w' \neq w$.
Therefore ${\SGC(f) = \dot{\bigcup} \{\SGC(f,w) : w \not\in \Ba + g\}}$. 
Summarizing, the following holds. 
 
\begin{remark}\label{rem:disjointness}
$\SGC(f) \cap \SGC(f') = \emptyset$ and $\SGC(f,w) \cap \SGC(f,w') = \emptyset$ 
for every $f, f' \in \Ba - e$ with $f \neq f'$ and every $w, w' \not\in \Ba + g$ with $w \neq w'$. 
\end{remark}

\begin{fact}\label{fact:XY-Z}
	If $f$ is not in $C(g,\Ba)$ and $w$ is an edge in $E[X\cup Y,Z]$ other than~$f$, 
	then there exists a good cycle $\GC(f,w)$ by defining
	\begin{itemize}
		\item $\Bd = \Ba -f+w$ and $\Bc = \Bb -f+w$.
	\end{itemize}
	Note that $\Bc = \Bd -e+g$. (See Figure~\ref{fig:EdgeXY-Z}.)
\end{fact}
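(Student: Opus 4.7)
The plan is to verify the construction directly: show that $\Bd$ and $\Bc$ are bases (spanning trees), that $\Ba\Bb\Bc\Bd$ is a $4$-cycle in $\BGG$, and that the good-cycle condition ($e \in \Ba \cap \Bd$ and $e \notin \Bb \cup \Bc$) holds. The only nontrivial ingredient is to locate the edges $e$, $f$, $g$, $w$ relative to the partition $\{X, Y, Z\}$ of $V(G)$; after that, everything follows by routine tree-swap arguments.

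First I would unpack the partition. By definition, $X$ is a component of $\Ba - e$ and the other component contains both ends of $f$; $Z$ is a component of $\Ba - f$ and the other contains both ends of~$e$. From these two statements, the ends of $e$ lie outside $Z$ and are separated by $e$ between $X$ and its complement, forcing $e \in E[X,Y]$; similarly the ends of $f$ lie outside $X$ and are separated by $f$ between $Z$ and its complement, forcing $f \in E[Y,Z]$. In particular, $f$ is the unique edge of $\Ba$ in $E[X \cup Y, Z]$, since removing $f$ from the spanning tree $\Ba$ disconnects $Z$ from $X \cup Y$.

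The hypothesis $f \notin C(g, \Ba)$ enters here: the fundamental cycle $C(g, \Ba)$ is $g$ together with the unique $\Ba$-path joining its ends, so if $g$ had one end in $Z$ and the other in $X \cup Y$, this path would be forced to traverse the unique crossing tree edge $f$. Hence both ends of $g$ lie in $X \cup Y$; combined with the fact that $\Bb = \Ba - e + g$ is a basis (so $g$ must reconnect the components $X$ and $Y$ of $\Ba - e$), we get $g \in E[X, Y]$, and therefore $g \notin E[X \cup Y, Z]$.

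With this structural information, $\Bd = \Ba - f + w$ is a spanning tree because the components of $\Ba - f$ are $Z$ and $X \cup Y$, and $w \in E[X \cup Y, Z]$ reconnects them. For $\Bc = \Bb - f + w$ the argument is analogous: within $X \cup Y$, replacing $e$ by $g$ preserves a spanning subtree (since $g \in E[X, Y]$), so the components of $\Bb - f$ are again $Z$ and $X \cup Y$, and $w$ reconnects them. A direct substitution gives $\Bc = \Ba - e + g - f + w = \Bd - e + g$, and the four symmetric differences along $\Ba\Bb\Bc\Bd$ are $\{e, g\}$, $\{f, w\}$, $\{e, g\}$, $\{f, w\}$, each of cardinality two; so $\Ba\Bb\Bc\Bd$ is a $4$-cycle in $\BGG$. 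Finally, $e$ belongs to $\Ba$ and $\Bd$ but not to $\Bb$ or $\Bc$, and $f \notin \Bd$, $w \in \Bc$, confirming $\GC \in \SGC(f, w)$. The main obstacle is purely the bookkeeping in the second and third paragraphs; once $g \in E[X,Y]$ is pinned down, the rest is mechanical.
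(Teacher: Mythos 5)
Your verification is correct and is essentially the argument the paper leaves implicit (the Fact is stated there with only a figure and a table): you correctly pin down $e\in E[X,Y]$, $f\in E[Y,Z]$, and $g\in E[X,Y]$, from which the tree-exchange and $4$-cycle checks are routine. The only slight imprecision is the sentence claiming that $f\notin C(g,\Ba)$ alone forces both ends of $g$ into $X\cup Y$ (it could a priori also place both ends in $Z$), but your subsequent appeal to $\Bb=\Ba-e+g$ being a basis, i.e.\ $e\in C(g,\Ba)$, rules that case out anyway.
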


\begin{figure}[h]
	\centering
	\begin{minipage}[c]{0.5\linewidth}
		\centering
		\includegraphics[width=0.5\linewidth]{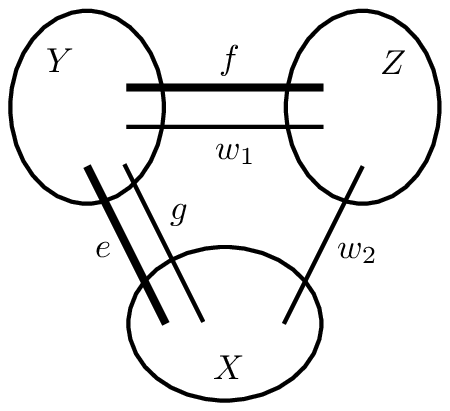}
	\end{minipage}%
	\begin{minipage}[c]{0.5\linewidth}
		\centering
		{\footnotesize 
			\begin{tabular}{|c|l l| c|}\hline
				$\Ba$ & $\{e,f,\ldots\}$ & $\{g,f,\ldots\}$ & $\Bb$\\
				$\Bd$ & $\{e,w,\ldots\}$ & $\{g,w,\ldots\}$ & $\Bc$\\
				\hline
			\end{tabular}
		}
	\end{minipage}
	\caption{Edge $f$ is in $\Ba$. There are edges~$w_1$ and $w_2$ between $X\cup Y$ and $Z$. 
		The table shows a good $\GC(f,w)$'s containing $\Ba\Bb$.}
	\label{fig:EdgeXY-Z}
\end{figure}

\begin{fact}\label{fact:YZ}
	If $f$ is in $C(g,\Ba)$ and $\ell$ is an edge in $E[Y,Z]$ other than $f$, 
	then there are two good cycles $\GC(f,\ell)$ by defining
	\begin{itemize}
		\item $\Bd = \Ba -f+\ell$ and $\Bc = \Bb -f+\ell$.
		\item $\Bd = \Ba -f+g$    and $\Bc = \Bb -f+\ell$.
	\end{itemize}
	Note that, in the first case, $\Bc = \Bd -e+g$ and, in the second case, $\Bc = \Bd -e+\ell$. (See Figure~\ref{fig:EdgeYZ}.)
\end{fact}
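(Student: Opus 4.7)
The plan is to pin down the location of the endpoints of $e$, $f$, $g$ relative to the tripartition $(X, Y, Z)$ of $V(G)$ forced by the hypotheses, and then directly verify, in each of the two cases, that the prescribed $\Bc$ and $\Bd$ form a good cycle $\Ba\Bb\Bc\Bd$.

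First I would observe that $X$, $Y$, $Z$ partition $V(G)$, and locate the endpoints of the three distinguished edges. Since $X$ is the component of $\Ba - e$ containing no end of $f$, both ends of $f$ lie in $Y\cup Z$; since $e$ joins the two components $X$ and $Y\cup Z$ of $\Ba - e$ and $Z$ excludes both ends of $e$, both ends of $e$ lie in $Y$. Similarly, $f$ joins the two components $Z$ and $X\cup Y$ of $\Ba - f$, so combined with the previous fact, $f$ has one end in $Y$ and one in $Z$. For $g$: since $\Bb = \Ba - e + g$ is a spanning tree, $g$ reconnects $X$ with $Y\cup Z$; the hypothesis $f\in C(g,\Ba)$ is equivalent to saying $g$ also reconnects $Z$ with $X\cup Y$, so altogether $g$ has one end in $X$ and one in $Z$. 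A direct consequence is that $\Ba - e - f$ has three components, with vertex sets $X$, $Y$, $Z$, and hence $\Bb - f = \Ba - e - f + g$ has components $X\cup Z$ (joined by $g$) and $Y$.

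Second I would verify the two constructions. In the first case, $\Bd = \Ba - f + \ell$ is a spanning tree because $\ell\in E[Y,Z]$ reconnects $Z$ and $X\cup Y$ in $\Ba - f$, and $e\in \Bd$; $\Bc = \Bb - f + \ell$ is a spanning tree because $\ell$ reconnects $X\cup Z$ and $Y$ in $\Bb - f$, and $e\notin \Bc$. The symmetric differences $\Ba\Delta\Bd = \{f,\ell\}$, $\Bb\Delta\Bc = \{f,\ell\}$ and $\Bc\Delta\Bd = \{e,g\}$ each have size two, and the four bases are pairwise distinct because exactly $\Ba$ and $\Bd$ contain $e$; hence $\Ba\Bb\Bc\Bd$ is a good cycle with $\Bc = \Bd - e + g$. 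The second case is analogous: $\Bd = \Ba - f + g$ is a spanning tree containing $e$ since $g\in E[X,Z]$ reconnects the components of $\Ba - f$, the same $\Bc$ works, and the symmetric differences $\Ba\Delta\Bd = \{f,g\}$, $\Bb\Delta\Bc = \{f,\ell\}$, $\Bc\Delta\Bd = \{e,\ell\}$ give the second good cycle, with $\Bc = \Bd - e + \ell$. The two cycles are distinct because the two $\Bd$'s differ: one uses $\ell\in E[Y,Z]$, the other $g\in E[X,Z]$, and these edge sets are disjoint.

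The main obstacle is the bridging step, namely deducing from $f\in C(g,\Ba)$ that $g\in E[X,Z]$; once that tripartite picture is in place, the rest is straightforward bookkeeping. A minor additional point is checking that $\ell\notin \Ba + g$, as required by the very definition of $\SGC(f,\ell)$: this follows because the only edge of $\Ba$ in $E[Y,Z]$ is $f$ (each non-$f$ edge of the subtree of $\Ba$ on $Y\cup Z$ lies wholly within $Y$ or wholly within $Z$), and $\ell\ne g$ since $g\in E[X,Z]$ while $\ell\in E[Y,Z]$.
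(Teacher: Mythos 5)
The paper states this Fact without proof, relying on Figure~\ref{fig:EdgeYZ}, so your job was to supply the verification the authors leave implicit; your argument does exactly that and is essentially correct. The one genuinely nontrivial step is locating the endpoints, and you handle it properly: $f\in E[Y,Z]$, and $f\in C(g,\Ba)$ forces $g\in E[X,Z]$, from which the component structures of $\Ba-f$ and $\Bb-f$ follow and both constructions check out (including the side conditions $\ell\notin\Ba+g$ and $\ell\neq g$, and the distinctness of the two cycles). One sentence is false as written: from ``$e$ joins the components $X$ and $Y\cup Z$ of $\Ba-e$'' and ``$Z$ contains no end of $e$'' the correct conclusion is that $e$ has one end in $X$ and one end in $Y$, i.e., $e\in E[X,Y]$; the claim that \emph{both} ends of $e$ lie in $Y$ contradicts your own premise. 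Fortunately nothing downstream depends on it---you rederive what you need from $g\in E[X,Z]$ and the three components of $\Ba-e-f$---so this is a cosmetic slip to fix rather than a gap in the proof.
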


\begin{figure}[h]
	\centering
	\begin{minipage}[c]{0.5\linewidth}
		\centering
		\includegraphics[width=0.5\linewidth]{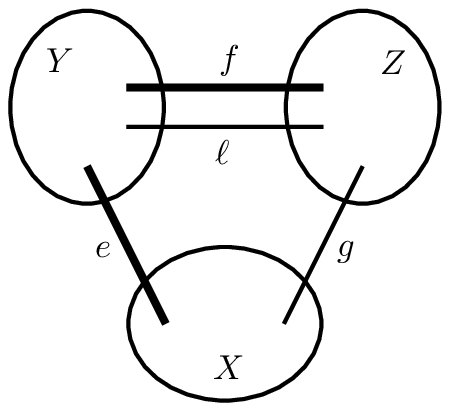}
	\end{minipage}%
	\begin{minipage}[c]{0.5\linewidth}
		\centering
		{\footnotesize
			\begin{tabular}{|c|l l| c|}\hline
				$\Ba$ & $\{e,f,\ldots\}$ & $\{g,f,\ldots\}$ & $\Bb$\\
				$\Bd$ & $\{e,\ell, \ldots\}$ & $\{g,\ell, \ldots\}$ & $\Bc$\\
				\hline
				$\Ba$ & $\{e,f,\ldots\}$ & $\{g,f,\ldots\}$ & $\Bb$ \\
				$\Bd$ & $\{e,g,\ldots\}$ & $\{g,\ell, \ldots\}$ & $\Bc$\\
				\hline
			\end{tabular}
		}
	\end{minipage}
	\caption{The bold edges are in $\Ba$. There is an edge $\ell$ between $Y$ and $Z$. The table shows two good cycles~$\GC(f,\ell)$ containing $\Ba\Bb$.}
	\label{fig:EdgeYZ}
\end{figure}

\begin{fact}\label{fact:XY}
	If $f$ is in $C(g, \Ba)$ and $h$ is an edge in $E[X,Y]$ other than $e$,
	then there exists a good cycle $\GC(f,h)$ by defining
	\begin{itemize}
		\item $\Bd = \Ba -f+g$ and $\Bc = \Bb -f+h$.
	\end{itemize}
	Note that $\Bc = \Bd -e+h$. (See Figure~\ref{fig:EdgeX-YZ}.)
\end{fact}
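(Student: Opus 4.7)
The plan is to identify the tripartition of $V(G)$ into $X, Y, Z$ that is forced by deleting $e$ and $f$ from the tree $\Ba$, then use $f \in C(g, \Ba)$ to locate the ends of $g$, and finally verify mechanically that $\Bd$ and $\Bc$ form a good 4-cycle with $\Ba, \Bb$.

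First I would show that $X, Y, Z$ are precisely the vertex sets of the three components of $\Ba - e - f$. Since $X$ is the component of $\Ba - e$ containing no end of $f$, both ends of $f$ lie in $V(G) \setminus X$; symmetrically, both ends of $e$ lie in $V(G) \setminus Z$. Combining this with the fact that $\Ba$ is a spanning tree, one checks that $\Ba - e - f$ has three components with vertex sets $X, Y, Z$, pairwise disjoint and covering $V(G)$, with $e \in E[X, Y]$ and $f \in E[Y, Z]$. Next I would use $f \in C(g, \Ba)$: the path $C(g, \Ba) - g$ is the unique $\Ba$-path between the ends of $g$, and it contains $e$ (since $\Bb = \Ba - e + g$ is a tree) and $f$ (by hypothesis). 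In the tree $\Ba$, the only paths crossing both $e$ and $f$ go from $X$ to $Z$ through $Y$, so the ends of $g$ lie one in $X$ and one in $Z$; that is, $g \in E[X, Z]$.

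With this structure in place, the verifications are direct. Since $f \in C(g, \Ba)$, the set $\Bd = \Ba - f + g$ is a basis, and $e \in \Bd$ because $e \notin \{f, g\}$. Now $\Bd - e = (\Ba - e - f) + g$; its three pieces $X, Y, Z$ get reconnected by $g \in E[X, Z]$ into just two components $X \cup Z$ and $Y$, so the edge $h \in E[X, Y]$ bridges them and $\Bc := \Bd - e + h$ is a basis. A one-line calculation yields $\Bc = \Bb - f + h$. The symmetric differences $\Ba \Delta \Bb$, $\Bb \Delta \Bc$, $\Bc \Delta \Bd$, $\Bd \Delta \Ba$ each have size two, so $\Ba \Bb \Bc \Bd$ is a 4-cycle in $\BGM$, and since $e \in \Ba \cap \Bd$ and $e \notin \Bb \cup \Bc$ this cycle is good. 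Finally, $f \notin \Bd$ gives $\GC \in \SGC(f)$, and $h \in \Bc$ together with $h \notin \Ba + g$ (note $h \neq g$ since $g \in E[X, Z]$, $h \neq e$ by assumption, and $e$ is the only edge of $\Ba$ between $X$ and $Y \cup Z$) gives $\GC \in \SGC(f, h)$.

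The main obstacle is purely bookkeeping: keeping the tripartition straight and tracking the locations of $e, f, g, h$ within it. Once that is done, every remaining step is routine manipulation of symmetric differences in the spanning tree.
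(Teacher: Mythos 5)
Your proof is correct and follows exactly the verification that the paper leaves implicit in Fact~\ref{fact:XY} (which is stated with only a figure and a table): you establish that $X$, $Y$, $Z$ are the components of $\Ba-e-f$ with $e\in E[X,Y]$, $f\in E[Y,Z]$, $g\in E[X,Z]$, and then check the two exchanges. The only addition worth noting is that your explicit justification that $h\notin \Ba+g$ (via $e$ being the unique $\Ba$-edge leaving $X$) is exactly what guarantees the cycle lies in $\SGC(f,h)$, which the paper uses later through Remark~\ref{rem:disjointness}.
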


\begin{fact}\label{fact:XZ}
	If $f$ is in $C(g,\Ba)$ and $j$ is an edge in $E[X,Z]$ other than $g$,
	then there exists a good cycle $\GC(f,j)$ by defining
	\begin{itemize}
		\item $\Bd = \Ba -f+j$ and $\Bc = \Bb -g+j$.
	\end{itemize}
	Note that $\Bc = \Bd -e+f$. (See Figure~\ref{fig:EdgeX-YZ}.)
\end{fact}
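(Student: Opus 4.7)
The plan is to verify the Fact by construction, using the hypothesis $f \in C(g,\Ba)$ to place $\Bc$ and $\Bd$ correctly among the spanning trees of $G$. First I would record the structural consequence of the hypothesis: since $\Bb = \Ba - e + g$ is a spanning tree we have $e \in C(g,\Ba)$, and together with $f \in C(g,\Ba)$ both $e$ and $f$ lie on the unique $\Ba$-path $P$ joining the endpoints of $g$. Walking along $P$, the two endpoints of $g$ sit on opposite sides of the pair $\{e,f\}$, so one endpoint of $g$ belongs to $X$ and the other to $Z$; in particular $g \in E[X,Z]$. Moreover, the only edge of $\Ba$ leaving $X$ is $e$, and its $V(G)\setminus X$-endpoint lies in $Y$ rather than in $Z$, so \emph{no} edge of $\Ba$ belongs to $E[X,Z]$. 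Any $j \in E[X,Z]$ with $j \neq g$ therefore satisfies $j \notin \Ba$ and $j \notin \Bb$.

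Next I would check that the two proposed new bases are indeed spanning trees. Deleting $f$ from $\Ba$ splits the tree into the components with vertex sets $Z$ and $X \cup Y$; since $j$ joins these, $\Bd = \Ba - f + j$ is a spanning tree. Rewriting $\Bc = \Bb - g + j = \Ba - e + j$, deleting $e$ from $\Ba$ splits it into components with vertex sets $X$ and $Y \cup Z$, and again $j$ bridges them, so $\Bc$ is a spanning tree.

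Finally I would confirm the four edges of the $4$-cycle and the good-cycle property. By construction $\Ba \Delta \Bb = \{e,g\}$ and $\Bd \Delta \Ba = \{f,j\}$, while $\Bb \Delta \Bc = \{g,j\}$ uses $j \notin \Bb$; direct calculation from the rewritten expressions gives $\Bc \Delta \Bd = \{e,f\}$, matching the claim $\Bc = \Bd - e + f$. Since $e \in \Ba \cap \Bd$ (using $e \notin \{f,j\}$) and $e \notin \Bb \cup \Bc$, the $4$-cycle $\Ba\Bb\Bc\Bd$ is good and belongs to $\SGC(f,j)$. The only step that uses the hypothesis $f \in C(g,\Ba)$ in an essential way — and hence is the main point to get right — is the opening structural observation placing one endpoint of $g$ in each of $X$ and $Z$ and ruling out other $\Ba$-edges in $E[X,Z]$; once this is in hand the rest is bookkeeping.
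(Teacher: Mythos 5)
Your verification is correct and follows exactly the construction the paper asserts: the paper states Fact~\ref{fact:XZ} without proof, giving only the definitions of $\Bc$ and $\Bd$ and the accompanying figure, and your argument supplies precisely the missing checks (that $e,f\in C(g,\Ba)$ forces $g\in E[X,Z]$ and $j\notin \Ba\cup\Bb$, that both exchanges yield spanning trees, and that the resulting $4$-cycle is good). Nothing is different in substance; you have simply made explicit what the paper leaves to the reader.
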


\begin{figure}[h]
	\centering
	\begin{minipage}[c]{0.5\linewidth}
		\centering
		\includegraphics[width=0.5\linewidth]{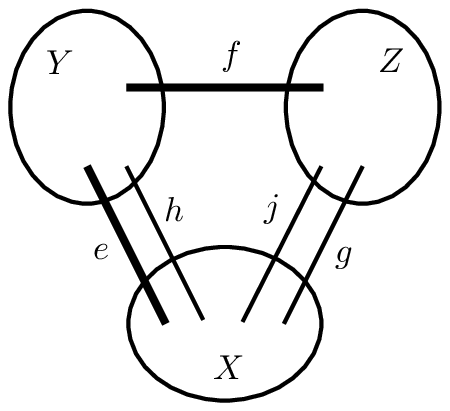}
	\end{minipage}%
	\begin{minipage}[c]{0.5\linewidth}
		\centering
		{\footnotesize
			\begin{tabular}{|c|l l|c|}\hline
				$\Ba$ & $\{e,f,\ldots\}$ & $\{g,f,\ldots\}$ & $\Bb$\\
				$\Bd$ & $\{e,g,\ldots\}$ & $\{g,h,\ldots\}$ & $\Bc$\\
				\hline
				$\Ba$ & $\{e,f,\ldots\}$ & $\{g,f,\ldots\}$ & $\Bb$\\
				$\Bd$ & $\{e,j,\ldots\}$ & $\{j,f,\ldots\}$ & $\Bc$\\
				\hline
			\end{tabular}
		}
	\end{minipage}
	\caption{The bold edges are in $\Ba$. There is an edge $h$ between $X$ and $Y$, and an edge $j$ between~$X$ and $Z$. The table shows the two good cycles $\GC$ containing $\Ba\Bb$.}
	\label{fig:EdgeX-YZ}
\end{figure}


\subsection{\texorpdfstring{$2$}{2}-edge-connected graphs}

We start by giving a lower bound on $\HC^*(M_G)$ where~$M_G$ is the
cycle matroid obtained from a $2$-edge-connected graph $G$.  In what
follows we shall use the notation introduced in the beginning of this section.
In particular, we use extensively the facts and the structure
of the vertex sets $X$, $Y$, $Z$ provided by adjacent bases $\Ba$, $\Bb$ and
a convenient edge $f$ in $\Ba \cap \Bb$.

\begin{prop}\label{prop:C4-ye-tail}
  Let $G$ be a $2$-edge-connected graph.
  Let $\Ba$ and $\Bb$ be adjacent bases of $\BG(M_G)$,
  say $\Bb = \Ba-e+g$.
  Each edge $f$ of $\Ba$ with at most one end in~$C(g,\Ba)$ provides a good cycle $\GC(f)$.
\end{prop}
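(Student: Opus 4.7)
The plan is to apply Fact~\ref{fact:XY-Z} directly, with most of the effort going into checking that its two hypotheses follow from the hypothesis on $f$ and from the $2$-edge-connectivity of $G$.

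First, I will observe that, for $f \in \Ba$, the condition ``at most one end in $C(g,\Ba)$'' is equivalent to ``$f \notin C(g,\Ba)$''. Indeed, since $\Ba$ is a tree, any edge of $\Ba$ whose two endpoints both lie on $C(g,\Ba)$ must itself be an edge of $C(g,\Ba)$; otherwise, together with the sub-path of $C(g,\Ba) \cap \Ba$ between its endpoints it would form a cycle in $\Ba$, contradicting acyclicity. In particular $f \neq e$, so $f \in \Ba - e$ and the notation $X, Y, Z$ from Section~\ref{sec:goodcycles} is well-defined: these are the vertex sets of the three components of $\Ba - e - f$, with $e$ joining $X$ and $Y$ and $f$ joining $Y$ and $Z$ inside $\Ba$.

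Next, I will locate $f$ inside an appropriate cut of $G$. Because $f$ joins $Y$ to $Z$ in $\Ba$, we have $f \in E[Y,Z] \subseteq E[X \cup Y, Z]$, and $E[X \cup Y, Z]$ is an edge cut of $G$ separating $Z$ from its complement. The $2$-edge-connectivity of $G$ then yields $|E[X \cup Y, Z]| \ge 2$, so there exists an edge $w \in E[X \cup Y, Z]$ with $w \ne f$.

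Finally, with $f \notin C(g,\Ba)$ and $w \in E[X \cup Y, Z] \setminus \{f\}$ in hand, Fact~\ref{fact:XY-Z} immediately produces a good cycle $\GC(f,w) \in \SGC(f)$, which is the desired $\GC(f)$. The only mildly subtle step is the initial equivalence between ``at most one end on $C(g,\Ba)$'' and ``$f \notin C(g,\Ba)$''; the rest is a one-line appeal to $2$-edge-connectivity followed by a direct invocation of Fact~\ref{fact:XY-Z}.
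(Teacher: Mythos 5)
Your proposal is correct and follows essentially the same route as the paper: both apply Fact~\ref{fact:XY-Z} after using $2$-edge-connectivity of $G$ to find an edge $w \neq f$ in the cut $E[X\cup Y, Z]$, which contains $f$. The only difference is that you make explicit the (true, and worth noting) equivalence between ``$f$ has at most one end in $C(g,\Ba)$'' and ``$f \notin C(g,\Ba)$'' for edges $f$ of the tree $\Ba$, a point the paper passes over silently when matching the proposition's hypothesis to that of Fact~\ref{fact:XY-Z}.
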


\begin{proof}
  It follows from Fact~\ref{fact:XY-Z} that, given an edge $f$ in $\Ba$ with at most one end in~$C(g,\Ba)$, 
  for every edge $w \in E[X \cup Y, Z]$ (one such $w$ exists because $G$ is $2$-edge-connected), there exists a good cycle $\GC(f,w)$.
\end{proof}

\begin{prop}\label{prop:C4-we-tail}
  Let $G$ be a connected graph.
  Let $\Ba$ and $\Bb$ be adjacent bases of~$\BG(M_G)$, say $\Bb = \Ba-e+g$.
  For each edge $w$ not in~$\Ba$ with at most one end in~$C(g,\Ba)$, there exists an edge $f_w \in \Ba - e$ that provides a good cycle~$\GC(f_w,w)$.
\end{prop}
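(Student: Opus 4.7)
The plan is to reduce the claim directly to Fact~\ref{fact:XY-Z}, which manufactures a good cycle $\GC(f,w)$ from any pair $(f,w)$ satisfying $f\notin C(g,\Ba)$ and $w \in E[X\cup Y, Z]$, where $X,Y,Z$ are the vertex classes determined by $\Ba$, $e$, and $f$. My candidate for $f_w$ will be any edge of the fundamental path $P_w = C(w,\Ba) - w$ that does not lie on the fundamental path $P_g = C(g,\Ba) - g$.

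Assuming such an $f_w$ exists, verifying the hypotheses of Fact~\ref{fact:XY-Z} is straightforward. Because $\Bb = \Ba - e + g$ is a basis, $g$ must reconnect the two components of $\Ba - e$, so the $\Ba$-path between the endpoints of $g$ traverses $e$; hence $e \in P_g$, which together with $f_w \notin P_g$ forces $f_w \in \Ba - e$. Since $C(g,\Ba) \cap \Ba = P_g$, the choice of $f_w$ also ensures $f_w \notin C(g,\Ba)$. For $w \in E[X\cup Y, Z]$, note that the two components of $\Ba - f_w$ are $Z$ and $V(G) \setminus Z = X\cup Y$ by the definition of $Z$; since $f_w \in C(w,\Ba)$, removing $f_w$ from $\Ba$ separates the two ends of $w$, so one of them lies in $Z$ and the other in $X\cup Y$. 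Finally, $w \neq f_w$ because $w \notin \Ba$ while $f_w \in \Ba$.

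The one substantive step, and the place where the hypothesis on $w$ is used, is showing that some $f_w \in P_w \setminus P_g$ exists. If instead $P_w \subseteq P_g$, then $P_w$ would be a subpath of $P_g$, forcing both endpoints of $w$ to lie in $V(P_g) = V(C(g,\Ba))$, contradicting the assumption that at most one end of $w$ lies in $C(g,\Ba)$. Hence $P_w \not\subseteq P_g$, an admissible $f_w$ exists, and Fact~\ref{fact:XY-Z} produces the desired $\GC(f_w, w)$. I expect this to be the only real obstacle; note that, unlike Proposition~\ref{prop:C4-ye-tail}, no $2$-edge-connectivity is needed here, since the edge $w$ is handed to us by hypothesis rather than having to be produced from the graph.
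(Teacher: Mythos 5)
Your proposal is correct and follows essentially the same route as the paper: both choose $f_w$ as an edge of $C(w,\Ba)\cap\Ba$ lying outside $C(g,\Ba)$ (your $P_w\setminus P_g$ is exactly the paper's ``edge of $\Ba-C(g,\Ba)$ in $C(w,\Ba)$''), justify its existence from the hypothesis that at most one end of $w$ lies on $C(g,\Ba)$, and then invoke Fact~\ref{fact:XY-Z}. You merely spell out the verification of the hypotheses of Fact~\ref{fact:XY-Z} in more detail than the paper does.
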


\begin{proof}
  Let $w$ be an edge not in $\Ba$ with at most one end in~$C(g,\Ba)$.
  As at most one end of $w$ is in~$C(g,\Ba)$, there exists an edge $f_w$ of $\Ba-C(g,\Ba) \subseteq \Ba - e$ in the fundamental cycle~$C(w,\Ba)$.
  It follows from Fact~\ref{fact:XY-Z} that there exists a good cycle~$\GC(f_w,w)$.
\end{proof}

\begin{prop}\label{prop:C4-we-chord}
  Let $G$ be 2-edge-connected graph.
  Let $\Ba$ and $\Bb$ be adjacent bases of $\BG(M_G)$, say $\Bb = \Ba-e+g$.
  Suppose that $C(g,\Ba)$ has length at least three.
  For each edge $w$ not in $\Ba + g$ with both ends in $C(g,\Ba)$, there exists an edge $f_w \in \Ba - e$ that provides a good cycle $\GC(f_w,w)$. 
\end{prop}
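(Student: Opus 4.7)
The plan is to choose $f_w \in \Ba - e$ so that one of Facts~\ref{fact:XY}, \ref{fact:XZ}, \ref{fact:YZ} applies and produces the required good cycle $\GC(f_w, w)$. I would label the vertices of $C(g,\Ba)$ as $v_1,\dots,v_k$ (with $k\ge 3$) so that $g=v_kv_1$ and $e=v_sv_{s+1}$ for some $1\le s\le k-1$, and write $w=v_iv_j$ with $1\le i<j\le k$. The chord $w$ then splits $C(g,\Ba)$ into two arcs: the direct arc $P_2=v_iv_{i+1}\cdots v_j$ (consisting entirely of $\Ba$-edges) and the complementary arc $P_1\cup\{g\}\cup P_3$ passing through $g$, where $P_1=v_1\cdots v_i$ and $P_3=v_j\cdots v_k$. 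The natural dichotomy is whether $e\in P_2$ or not.

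First I would handle Case~1, where $e\notin P_2$. Then either $e\in P_1$ (so $s+1\le i$) or $e\in P_3$ (so $s\ge j$), and $P_2$ contains $\Ba$-edges distinct from $e$. I would set $f_w=v_iv_{i+1}$ in the first subcase and $f_w=v_{j-1}v_j$ in the second. A direct computation of the vertex sets $X,Y,Z$ determined by $\Ba,\Bb,e$, and $f_w$ then shows that $v_i\in Y$ and $v_j\in Z$, hence $w\in E[Y,Z]$; moreover $w\ne f_w$ since $w\notin\Ba$ while $f_w\in\Ba$. Thus Fact~\ref{fact:YZ} delivers the desired good cycle.

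Next, in Case~2, $e\in P_2$, so $i\le s<j$. I would split on whether $w$ is parallel to $g$. In the non-parallel subcase ($(i,j)\ne(1,k)$), at least one of $P_1,P_3$ contains an $\Ba$-edge; I would take $f_w=v_{i-1}v_i$ if $i\ge 2$ and $f_w=v_jv_{j+1}$ otherwise (valid since then $j\le k-1$). The resulting $X,Y,Z$ satisfy $w\in E[X,Y]$ with $w\ne e$, so Fact~\ref{fact:XY} applies. In the parallel subcase ($i=1$ and $j=k$, so $w\ne g$ is parallel to $g$), both $P_1$ and $P_3$ are empty, but $P_2$ has $k-1\ge 2$ edges, so I can pick $f_w\in P_2\setminus\{e\}$ (specifically $f_w=v_1v_2$ if $s\ge 2$ and $f_w=v_2v_3$ if $s=1$). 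A computation then shows that $v_1$ and $v_k$ lie in $X$ and $Z$ (in some order), giving $w\in E[X,Z]$ with $w\ne g$, so Fact~\ref{fact:XZ} yields the good cycle.

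The main obstacle will be the bookkeeping: verifying in each subcase, from the definitions of $X,Y,Z$ in Section~\ref{sec:goodcycles}, that $v_i$ and $v_j$ land in the parts expected by the invoked fact, and carefully handling the corner cases where one of $P_1,P_3$ is empty or $w$ is parallel to $g$ or to an edge of $\Ba$. In each such situation, the guiding principle, always selecting the $\Ba$-edge of $C(g,\Ba)$ closest to an endpoint of $w$ on the appropriate side of $e$, continues to work.
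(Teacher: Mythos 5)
Your proposal is correct and follows essentially the same route as the paper: a three-way case split on the position of $w$ relative to $e$ and $g$ on $C(g,\Ba)$ (your ``$e\notin P_2$'', ``$e\in P_2$, $w$ not parallel to $g$'', and ``$w$ parallel to $g$'' cases correspond exactly to the paper's three cases, since $C(w,\Ba)=w\cup P_2$), each resolved by Fact~\ref{fact:YZ}, \ref{fact:XY}, or \ref{fact:XZ} respectively with an explicit admissible choice of $f_w$. The only differences are cosmetic (explicit vertex labels and a harmless swap of the roles of $v_i$ and $v_j$ in one subcase of your Case~1).
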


\begin{proof}
  As $C(g,\Ba)$ has length at least three, $e$ and $g$ are not parallel edges.
  Let $w$ be an edge not in $\Ba + g$ with both ends in $C(g,\Ba)$.
    
  \begin{case}
    The edge $w$ is parallel to $g$.
  \end{case}
  
  Let $f_w$ be an edge of $C(g,\Ba)-e-g$.
  In this case $w$ is as $j$ in Fact~\ref{fact:XZ}.
  
  \begin{case}
    The edge $w$ is not parallel to $g$ and the fundamental cycle $C(w,\Ba)$ contains the edge $e$.
  \end{case}
  
  Let $f_w$ be an edge of $C(g,\Ba)-e-g$ and not in $C(w,\Ba)$.
  In this case $w$ is as $h$ in Fact~\ref{fact:XY}.
  
  \begin{case}
    The edge $w$ is not parallel to $g$ and the fundamental cycle $C(w,\Ba)$ does not contain the edge $e$.
  \end{case}
  
  Let $f_w$ be an edge of $C(w,\Ba)-w \subseteq \Ba - e$. 
  In this case $w$ is as $\ell$ in Fact~\ref{fact:YZ}.

So, each case leads to one of the previously stated facts where we obtain an $f_w$ and a good cycle $\GC(f_w,w)$. 
\end{proof}

\begin{lemma}\label{lem:C4-2conn}
  If $G$ is a $2$-edge-connected graph of order $n \ge 4$ and size at least $n+2$, 
  then every edge of $\BG(M_G)$ is in two good cycles.
\end{lemma}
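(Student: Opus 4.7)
The plan is to split on the length of the fundamental cycle $C(g,\Ba)$, where $\Ba\Bb$ is an arbitrary edge of $\BG(M_G)$ with $\Bb = \Ba - e + g$. In each case I will exhibit two good cycles for $\Ba\Bb$, and the disjointness stated in Remark~\ref{rem:disjointness} will guarantee that they are distinct.

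If $C(g,\Ba)$ has length two, then $e$ and $g$ are parallel edges and $V(C(g,\Ba))$ consists of exactly two vertices. No tree edge $f \in \Ba - e$ can have both ends in $V(C(g,\Ba))$, for otherwise $f$ would be parallel to $e$ and together with $e$ form a $2$-cycle inside the spanning tree $\Ba$. Hence every $f \in \Ba - e$ has at most one end in $C(g,\Ba)$, and Proposition~\ref{prop:C4-ye-tail} produces a good cycle in $\SGC(f)$ for each such~$f$. Since $n \ge 4$ yields $|\Ba - e| \ge 2$, I pick two distinct edges $f, f' \in \Ba - e$; by Remark~\ref{rem:disjointness} the resulting good cycles lie in the disjoint sets $\SGC(f)$ and $\SGC(f')$ and are therefore distinct.

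If $C(g,\Ba)$ has length at least three, I switch to counting edges outside $\Ba + g$. Since $|E(G)| \ge n + 2$ and $|\Ba + g| = n$, there exist at least two distinct edges $w \ne w'$ outside $\Ba + g$. For each such edge, depending on whether it has at most one end or both ends in $V(C(g,\Ba))$, I invoke Proposition~\ref{prop:C4-we-tail} or Proposition~\ref{prop:C4-we-chord} respectively, obtaining an edge $f_w \in \Ba - e$ together with a good cycle in $\SGC(f_w, w)$, and analogously $f_{w'}$ and a good cycle in $\SGC(f_{w'}, w')$. Remark~\ref{rem:disjointness} again shows these two good cycles are distinct: if $f_w = f_{w'}$, then $\SGC(f_w, w) \cap \SGC(f_w, w') = \emptyset$; otherwise $\SGC(f_w) \cap \SGC(f_{w'}) = \emptyset$.

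The main obstacle I expect is choosing the right case split: Proposition~\ref{prop:C4-we-chord} requires $C(g,\Ba)$ to have length at least three, so the length-two situation must be handled separately, and in that situation the $w$-based propositions may yield too few edges outside $\Ba + g$ relative to the edges of $\Ba - e$ that automatically satisfy the hypothesis of the $f$-based Proposition~\ref{prop:C4-ye-tail}. The two hypotheses $n \ge 4$ and $|E(G)| \ge n+2$ are exactly what is needed for the two counts (of edges of $\Ba - e$ and of edges outside $\Ba + g$) to reach two in their respective cases.
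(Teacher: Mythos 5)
Your proof is correct, and it leans on exactly the same toolbox as the paper (Propositions~\ref{prop:C4-ye-tail}--\ref{prop:C4-we-chord} and Remark~\ref{rem:disjointness}), with an identical treatment of the parallel case. Where you genuinely diverge is in the case where $C(g,\Ba)$ has length at least three: the paper partitions the relevant edges into three classes (tree edges with at most one end on $C(g,\Ba)$, non-tree edges with at most one end on it, and non-tree edges spanning it), argues separately when any class has two members, and then needs a final step showing that otherwise one can find one edge in each of two different classes whose associated good cycles use distinct $f$'s. You instead observe that $|E(G)|\ge n+2$ and $|\Ba+g|=n$ already give two edges $w\neq w'$ outside $\Ba+g$, that each of them yields a good cycle in some $\SGC(f_w,w)$ via Proposition~\ref{prop:C4-we-tail} or~\ref{prop:C4-we-chord}, and that the two-level disjointness of Remark~\ref{rem:disjointness} (disjointness in $f$ when $f_w\neq f_{w'}$, and in $w$ when $f_w=f_{w'}$) settles distinctness in one stroke. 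This is a cleaner decomposition: it avoids Proposition~\ref{prop:C4-ye-tail} entirely in this case and, in particular, sidesteps the paper's slightly delicate closing claim that an edge of $\Ba$ with at most one end in $C(g,\Ba)$ must exist in the residual situation. The only thing the paper's version buys in exchange is that it exhibits good cycles coming from tree edges as well, which is not needed for the bound of two.
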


\begin{proof}
  Let $\Ba$ and $\Bb$ be adjacent bases of $\BG(M_G)$, say $\Bb = \Ba -e +g$.
  
  Suppose that $e$ and $g$ are parallel edges; that is, $C(g,\Ba)$ is the $2$-cycle $eg$.
  Since~$G$ has order $n \ge 4$, there are two edges in $\Ba - e$.
  By Proposition~\ref{prop:C4-ye-tail}, each one of them gives a good cycle, 
  and they are distinct by Remark~\ref{rem:disjointness}. 
  
  Now, suppose that $e$ and $g$ are not parallel edges in $G$.
  Thus, $C(g,\Ba)$ has length at least three.
  If there are two edges in $\Ba$ with at most one end in~$C(g,\Ba)$ or two edges not in~$\Ba$ 
  with at most one end in~$C(g,\Ba)$, by Propositions~\ref{prop:C4-ye-tail} and~\ref{prop:C4-we-tail}, respectively,  
  we have two good cycles, distinct by Remark~\ref{rem:disjointness}, so we are done.
  Also, if there are two edges not in~$\Ba+g$ with both ends in~$C(g,\Ba)$, 
  then we are done by Proposition~\ref{prop:C4-we-chord} and Remark~\ref{rem:disjointness}. 
  
  Finally, as $G$ has size at least $n+2$, we may assume there exist an edge in~$\Ba$ with at most one end in~$C(g,\Ba)$ 
  and an edge not in $\Ba+g$ with both ends in~$C(g,\Ba)$. 
  Therefore, by Propositions~\ref{prop:C4-ye-tail} and~\ref{prop:C4-we-chord}, respectively, 
  and Remark~\ref{rem:disjointness}, the lemma follows.
\end{proof}

The \defi{$1$-sum} $H \oplus_1 H'$ of two graphs $H$ and $H'$ is the graph obtained from identifying a vertex of $H$ with a vertex of $H'$.

\begin{lemma}\label{lem:no2good}
Let $G$ be a $2$-edge-connected graph of order $n \ge 4$.  There exists an
edge in $\BG(M_G)$ not in two good cycles if and only if $G$ is
either~$C_n$ or~$C_2 \oplus_1 C_{n-1}$.
\end{lemma}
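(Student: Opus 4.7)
The plan is to prove both directions of the biconditional, using Lemma~\ref{lem:C4-2conn} to reduce the nontrivial direction to graphs of small size. For the easy ($\Leftarrow$) direction I would verify the two exceptional graphs directly. When $G = C_n$, the matroid $M_G \setminus e$ has a unique basis (the spanning path $G - e$), so for any $\Ba, \Bb$ with $\Bb = \Ba - e + g$ there is no candidate for $\Bc \ne \Ba$ with $e \notin \Bc$; hence $\Ba\Bb$ lies in no good cycle at all. When $G = C_2 \oplus_1 C_{n-1}$, I would choose $\Ba\Bb$ whose exchange occurs inside $C_{n-1}$ (both $e, g$ on that cycle), so that $C(g,\Ba) = C_{n-1}$ and the unique edge of $\Ba$ outside $C(g,\Ba)$ is the single edge of $C_2$ contained in $\Ba$. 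A case check through Facts~\ref{fact:XY-Z}--\ref{fact:XZ} then shows that only Fact~\ref{fact:XY-Z}, applied to this $f$ and the other parallel edge of $C_2$, yields a good cycle; every other choice of $f$ produces empty $E[X,Y]\setminus\{e\}$, $E[X,Z]\setminus\{g\}$, and $E[Y,Z]\setminus\{f\}$, contributing nothing. So $\Ba\Bb$ lies in exactly one good cycle.

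For the converse ($\Rightarrow$), Lemma~\ref{lem:C4-2conn} disposes of $|E(G)| \ge n+2$. Since $G$ is $2$-edge-connected we have $|E(G)| \ge n$, so we need only treat $|E(G)| \in \{n, n+1\}$. The case $|E(G)| = n$ forces $G = C_n$. For $|E(G)| = n+1$, the degree sum $2(n+1)$ with minimum degree $\ge 2$ leaves exactly two possible structures: either one vertex of degree $4$ and all others of degree $2$, giving $G = C_a \oplus_1 C_b$ with $a+b = n+1$ and $a, b \ge 2$; or two vertices of degree $3$ with the rest of degree $2$, giving a theta graph $\theta(p_1,p_2,p_3)$ with $p_1+p_2+p_3 = n+1$ and $p_i \ge 1$ (a brief $2$-edge-connectivity argument rules out loop-like configurations at either degree-$3$ vertex).

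It remains to show that among these only $C_2 \oplus_1 C_{n-1}$ has an edge in fewer than two good cycles. For $G = C_a \oplus_1 C_b$ we have $M_G = M_{C_a} \oplus M_{C_b}$, hence $\BG(M_G) = K_a \square K_b$. For an edge $\Ba\Bb$ whose exchange lies inside $C_a$ (so $e, g \in E(C_a)$ and $C(g, \Ba) = C_a$), only Fact~\ref{fact:XY-Z} contributes; it produces exactly one good cycle for each of the $b-1$ edges of $\Ba \cap C_b$, and analogously $a-1$ for exchanges inside $C_b$. Thus every edge has $\ge 2$ good cycles iff $\min(a,b) \ge 3$, isolating $C_2 \oplus_1 C_{n-1}$ as the unique bad member of this family. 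For theta graphs, I would classify edges of $\BG(M_G)$ as \emph{intra-type} (endpoints omit edges from the same pair of paths) or \emph{inter-type}. An intra-type edge with $e, g \in P_i$ and $P_k$ the intact path contributes $p_j - 1$ good cycles from $f \in P_j - e_j$ via Fact~\ref{fact:XY-Z}, zero from $f \in P_i$, and $2p_k$ from $f \in P_k$ via Fact~\ref{fact:YZ} (with $\ell = e_j$), totalling $p_j + 2p_k - 1 \ge 2$; an inter-type edge contributes $n - 2 \ge 2$ by an analogous analysis across the three paths. The main obstacle will be the systematic identification of the sets $X, Y, Z$ and their associated edge sets $E[X,Y]$, $E[X,Z]$, $E[Y,Z]$ in the many subcases of this count; once that structural bookkeeping is complete, the good cycles follow directly from the facts of Section~\ref{sec:goodcycles}.
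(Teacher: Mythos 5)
Your proposal is correct and follows essentially the same route as the paper: reduce to $|E(G)|\in\{n,n+1\}$ via Lemma~\ref{lem:C4-2conn}, classify the $(n{+}1)$-edge $2$-edge-connected graphs as $1$-sums of two cycles or theta graphs (you do this by degree sequences, the paper by closed ear decompositions --- equivalent), and extract two good cycles per edge from Facts~\ref{fact:XY-Z}--\ref{fact:XZ} together with Remark~\ref{rem:disjointness}. The only soft spot is your claim that the chosen edge of $\BG(M_{C_2\oplus_1 C_{n-1}})$ lies in \emph{exactly} one good cycle: the Facts are existence statements, so checking that their hypotheses fail for the other choices of $f$ does not by itself bound the number of good cycles from above --- a short direct enumeration of the $4$-cycles through $\Ba\Bb$ is needed (the conclusion is true, and the paper itself only defers this verification to Figure~\ref{fig:notwogoods}).
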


\begin{proof}
Let $m$ denote the number of edges of $G$.
Since $G$ is $2$-edge-connected, every edge is in a cycle, so $m \ge n$.
If $m = n$, then $G$ is the $n$-cycle $C_n$ and no edge of $\BG(M_G)$ is in a good cycle.
For $m \ge n+2$, Lemma~\ref{lem:C4-2conn} implies that every edge of $\BG(M_G)$ is in two good cycles.
So, we may assume that $m = n+1$.
Because every $2$-edge-connected graph has a closed ear-decomposition~\cite{BoMu08}, and $G$ has exactly $m+1$ edges, the closed ear-decomposition of $G$ consist of exactly two ears.
Thus, $G$ is either
\begin{enumerate}[label=\roman*)]
	\item\label{it:peixe} The 1-sum of two cycles, or
	\item\label{it:3paths} The union of three internally disjoint paths that have the same two end vertices.
\end{enumerate}

First, suppose that $G$ is the $1$-sum of two cycle, say $C^1 \oplus_1 C^2$.
Since we only consider graphs with no loops, the length of both $C^1$ and $C^2$ is at least two.
When the length of both $C^1$ and $C^2$ is at least three, Proposition~\ref{prop:C4-ye-tail} provides two good cycles for every edge of $\BG(M_G)$.
Therefore, $G$ is $C_2 \oplus_1 C_{n-1}$ and it can be verified that there are adjacent bases $\Ba$ and $\Bb$ in $\BGG$ 
for which there is only one good cycle (Figure~\ref{fig:notwogoods}).

Now, suppose that $G$ is the union of three internally disjoint paths, say $P_1$, $P_2$,~$P_3$, that have the same two end vertices.
In this case we shall show that every edge of~$\BG(M_G)$ is in two good cycles.

Let $\Ba\Bb$ be an edge of $\BG(M_G)$, say $\Bb= \Ba-e+g$.
First, suppose that $e$ and~$g$ are in the same path, say $P_1$.
Thus, without loss of generality, all edges of $P_2$ are in~$\Ba$, and there exists an edge $w$ in $P_3$ not in $\Ba$.
Let $f$ be an edge of $P_2$ (and therefore of~$\Ba$).
Keeping our notation defined in Section~\ref{sec:goodcycles}, $w$ is in $E[Y,Z]$ and Fact~\ref{fact:YZ} provides two good cycles $\GC(f,w)$.

Finally, suppose that $e$ and $g$ are in different paths;
say $e$ belongs to $P_1$ and $g$ belongs to $P_2$.
Thus, all edges of $P_1$ are in $\Ba$, and there exists an edge $w$ in $P_3$ not in~$\Ba$.
If there exists an edge $f \in \Ba-e$ in $P_1$, then $w$ is in $E[X,Z]$ as $j$ in Fact~\ref{fact:XZ}. 
If there exists an edge $f \in \Ba$ in $P_2$, then $w$ is in $E[X,Y]$ as $h$ in Fact~\ref{fact:XY}.
If there exists an edge $f \in \Ba$ in $P_3$, then $w$ is in $E[X\cup Y,Z]$ as in Fact~\ref{fact:XY-Z}.
In any case we get a good cycle $\GC(f,w)$.
Since $G$ has order at least four, there are two edges $f, f' \in\Ba$ other than $e$.
Therefore, by Remark~\ref{rem:disjointness}, every edge $\Ba\Bb$ is in two good cycles, named $\GC(f,w)$ and $\GC(f',w)$.
\end{proof}

\begin{figure}[h]
	\centering	
	\begin{minipage}{0.7\linewidth}
	\centering
	\includegraphics[width=0.25\linewidth]{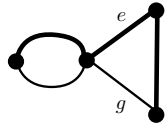}
	\caption{A $2$-edge-connected graph $G$ whose basis graph $\BGG$ has an edge, $\Ba\Bb$, with no two good cycles. The basis $\Ba$ is the spanning tree in thick edges and $\Bb = \Ba - e + g$.}
	\label{fig:notwogoods}
	\end{minipage}
\end{figure}

\begin{prop}\label{prop:2conn-m=n}
For $n \ge 3$, 
every edge of~$K_n$ is in~$(n-2)!$ Hamiltonian cycles.
\end{prop}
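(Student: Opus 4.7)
The plan is to give a direct bijection between Hamiltonian cycles of $K_n$ through a fixed edge and linear orderings of the remaining vertices. Fix vertices $u$ and $v$ and consider a Hamiltonian cycle $C$ of $K_n$ that uses the edge $uv$. Cutting $C$ at this edge produces a Hamiltonian $u$-$v$ path $u=v_0,v_1,\ldots,v_{n-2},v_{n-1}=v$, and recording the internal sequence $(v_1,\ldots,v_{n-2})$ yields a permutation of $V(K_n)\setminus\{u,v\}$. Conversely, because $K_n$ is complete, any permutation $(v_1,\ldots,v_{n-2})$ of the other $n-2$ vertices defines a legitimate Hamiltonian $u$-$v$ path, and appending the edge $uv$ recovers a Hamiltonian cycle through $uv$. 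Distinct permutations give distinct cycles (the unordered cyclic structure is recovered uniquely because we fix the endpoints $u$ and $v$ and the orientation starting at $u$), so the map is a bijection and the count is $(n-2)!$.

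No serious obstacle arises, since the completeness of $K_n$ guarantees that every permutation yields a valid path. As a sanity check one can argue by edge-transitivity and double counting: $K_n$ has $(n-1)!/2$ Hamiltonian cycles in total, each uses $n$ edges, and $K_n$ has $\binom{n}{2}$ edges, each lying in the same number of Hamiltonian cycles by the automorphism action; hence each edge belongs to
\[
\frac{n\cdot(n-1)!/2}{\binom{n}{2}} = (n-2)!
\]
Hamiltonian cycles, in agreement with the bijective count.
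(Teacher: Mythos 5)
Your proof is correct and takes essentially the same approach as the paper's: both identify the Hamiltonian cycles of $K_n$ through a fixed edge $uv$ with the $(n-2)!$ orderings of the remaining vertices along the corresponding Hamiltonian $u$--$v$ path. The double-counting sanity check via edge-transitivity is a nice additional confirmation but does not change the argument.
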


\begin{proof}
Let $\{v_1,v_2, \ldots, v_n\}$ be the vertex set of $K_n$.
Consider the edge $v_iv_j$. 
Without loss of generality, we may assume that $v_iv_j = v_1v_2$.
Note that $v_1v_2v_{\sigma(3)}\ldots v_{\sigma(n)}$ is a Hamiltonian cycle for any permutation $\sigma$ of $\{3,\ldots, n\}$.
Therefore, the number of Hamiltonian cycles containing the edge $v_1v_2$ is $(n-2)!$ and the lemma follows.
\end{proof}

\begin{prop}\label{prop:cycle+}
  For $n \ge 3$, every edge of $K_2 \square K_{n-1}$
  is in $(n-2)!(n-3)!$ Hamiltonian cycles.
\end{prop}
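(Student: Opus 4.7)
The plan is to exhibit $(n-2)!(n-3)!$ pairwise distinct Hamiltonian cycles through any prescribed edge of $K_2 \square K_{n-1}$. Label the vertices $\{a_1, \ldots, a_{n-1}\}$ (forming the top copy of $K_{n-1}$) and $\{b_1, \ldots, b_{n-1}\}$ (the bottom copy), with the remaining edges being the vertical ones $a_i b_i$. The index permutations (acting simultaneously on top and bottom) composed with the top--bottom swap form a group of automorphisms of $K_2 \square K_{n-1}$ that acts transitively on horizontal edges and on vertical edges, so it suffices to treat the two representatives $e = a_1 b_1$ and $e = a_1 a_2$.

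For $e = a_1 b_1$, I would use only cycles with exactly two vertical edges, namely $a_1 b_1$ and one other $a_{x_{n-2}} b_{x_{n-2}}$. For every permutation $(x_1, \ldots, x_{n-2})$ of $\{2, \ldots, n-1\}$ and every permutation $(y_1, \ldots, y_{n-3})$ of $\{2, \ldots, n-1\} \setminus \{x_{n-2}\}$, the cyclic sequence
\[
a_1, \ a_{x_1}, \ \ldots, \ a_{x_{n-2}}, \ b_{x_{n-2}}, \ b_{y_1}, \ \ldots, \ b_{y_{n-3}}, \ b_1
\]
is a Hamiltonian cycle through $a_1 b_1$: consecutive top (resp.\ bottom) vertices are adjacent because each $K_{n-1}$ copy is complete. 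Distinct parameter pairs yield distinct cycles (the parameters are read off by traversing the cycle starting at $a_1$), giving $(n-2)! \cdot (n-3)! = (n-2)!(n-3)!$ cycles.

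For $e = a_1 a_2$, the analogous family visits the top copy in two pieces separated by a single tour of the bottom copy. For each permutation $(x_1, \ldots, x_{n-3})$ of $\{3, \ldots, n-1\}$ (setting $x_0 := 1$ and $x_{n-2} := 2$), each split position $s \in \{0, 1, \ldots, n-3\}$, and each permutation $(y_1, \ldots, y_{n-3})$ of $\{1, \ldots, n-1\} \setminus \{x_s, x_{s+1}\}$, form the cyclic sequence
\[
a_{x_0}, \ \ldots, \ a_{x_s}, \ b_{x_s}, \ b_{y_1}, \ \ldots, \ b_{y_{n-3}}, \ b_{x_{s+1}}, \ a_{x_{s+1}}, \ \ldots, \ a_{x_{n-2}},
\]
whose closing edge $a_{x_{n-2}} a_{x_0}$ is precisely $a_2 a_1$. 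Parameter counting gives $(n-3)! \cdot (n-2) \cdot (n-3)! = (n-2)!(n-3)!$ cycles.

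What remains is essentially bookkeeping: verifying that every consecutive pair in each sequence is an edge of $K_2 \square K_{n-1}$, every vertex appears exactly once, and the encoding is injective in each case. The only mildly delicate point is the boundary values $s = 0$ and $s = n-3$ of the horizontal construction, where one of the two "top pieces" collapses to $a_1$ or $a_2$ alone and the cycle uses the vertical edge $a_1 b_1$ or $a_2 b_2$ immediately adjacent to $a_1 a_2$; one checks by inspection that these degenerate cycles are still valid Hamiltonian cycles and pairwise distinct from the non-degenerate ones.
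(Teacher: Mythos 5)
Your proposal is correct and takes essentially the same route as the paper: both count exactly the Hamiltonian cycles that use precisely two vertical edges, splitting into the vertical-edge and horizontal-edge cases and arriving at $(n-2)\bigl((n-3)!\bigr)^2=(n-2)!(n-3)!$. The only difference is presentational — the paper assembles each such cycle as a symmetric difference of a Hamiltonian cycle in each $K_{n-1}$-copy with a $4$-cycle of two vertical edges, while you write the same cycles out directly as vertex sequences.
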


\begin{proof}
  If $n = 3$, then $(n-2)!(n-3)! = 1$ and $K_2 \square K_2$ is $C_4$.
  So, we may assume that $n \ge 4$.

Let $\{u_1, u_2\}$ be the vertex set of $K_2$, and $\{v_1, \ldots,
v_{n-1}\}$ be the vertex set of~$K_{n-1}$.
Let ${K_{n-1}^1 =  \bigl\{(u_1, v_1), \ldots, (u_1, v_{n-1})\bigr\}}$ and
$K_{n-1}^2 = \{(u_2,v_1), \ldots, (u_2, v_{n-1})\bigr\}$.
There exists a natural partition
$(K_{n-1}^1,K_{n-1}^2)$ of the vertex set of $K_2 \square K_{n-1}$. 
For every permutation~$\sigma$ of $\{1, \ldots, n-1\}$,
let $C_{\sigma}^\ell$ be the Hamiltonian cycle
$(u_\ell,v_{\sigma(1)})\cdots(u_\ell,v_{\sigma(n-1)})$ of~$K_{n-1}^\ell$.
Consider an edge $(u_{\ell}, v_i)(u_{\ell},v_j)$ of $K_2 \square K_{n-1}$. 
 Without loss of generality we may assume that $\ell=i=1$ and $j=2$.
For a permutation $\sigma_1$ with $\sigma_1(1)=1$ and $\sigma_1(2)=2$,
let $(u_1,v_x)(u_1,v_y)$ be an edge of $C_{\sigma_1}^1$ other than
$(u_1,v_1)(u_1,v_2)$.
For every permutation~$\sigma_2$ such that $\sigma_2(1)=x$ and $\sigma_2(2)=y$, 
the Hamiltonian cycle $C_{\sigma_2}^2$ of $K_{n-1}^2$ uses the edge $(u_2,v_x)(u_2,v_y)$.
Let $S$ denote the cycle $(u_1,v_x)(u_2,v_x)(u_2,v_y)(u_1,v_y)$.
The symmetric difference $C_{\sigma_1}^1 \Delta S \Delta C_{\sigma_2}^2$ is a Hamiltonian cycle of $\BG(M_G)$ containing the edge $(u_1,v_1)(u_1,v_2)$.
Since the edge $(u_1,v_x)(u_1,v_y)$ can be chosen in $n-2$ different ways, and the number of permutations~$\sigma_1$ as well as the number of permutations~$\sigma_2$ is~$(n-3)!$, 
we obtain $(n-2)\bigl((n-3)!\bigr)^2$ Hamiltonian cycles passing through the edge~$(u_1,v_1)(u_1,v_2)$.

Now consider an edge $(u_1,v_i)(u_2,v_i)$ of $K_2 \square K_{n-1}$.
Without loss of generality we may assume  that $i=1$.
Consider the edge $(u_1,v_1)(u_1,v_j)$.
Such edge is in ${(n-3)!}$ Hamiltonian cycles $C^1$ of $K^1_{n-1}$.
On the other hand, there are $(n-3)!$ Hamiltonian cycles $C^2$ of $K^2_{n-1}$
passing through the edge $(u_2,v_1)(u_2,v_j)$.
Let $S$ denote the cycle $(u_1,v_1)(u_1,v_j)(u_2,v_j)(u_2,v_1)$.
The symmetric difference $C^1 \Delta S \Delta C^2$ is a Hamiltonian cycle of $\BG(M_G)$ containing the edge $(u_1,v_1)(u_2,v_1)$.
Since there are $(n-3)!$ cycles $C^1$, as well as cycles $C^2$,
and the edge $(u_1,v_1)(u_1,v_j)$ can be chosen in $n-2$ different ways, 
we obtain $(n-2)\bigl((n-3)!\bigr)^2$ Hamiltonian cycles containing the edge $(u_1,v_1)(u_2,v_1)$. 
Note that $(n-2)\bigl((n-3)!\bigr)^2=(n-2)!(n-3)!$, hence the proposition follows.
\end{proof}

\begin{theorem}\label{thm:2-conn}
  If $G$ is a $2$-edge-connected graph of order $n \ge 3$, 
  then every edge of~$\BG(M_G)$ is in $2^{n-3}$ Hamiltonian cycles. 
\end{theorem}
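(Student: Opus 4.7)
The plan is to induct on $n$. The base case $n=3$ reads $2^{n-3}=1$, which is Haff's theorem from the introduction: since $G$ is $2$-edge-connected of order $3$, the basis graph $\BG(M_G)$ has at least three vertices and is therefore edge Hamiltonian.

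For the inductive step $n\ge 4$, Lemma~\ref{lem:no2good} splits the argument into two cases. If $G$ is one of the two exceptional graphs, the basis graph is either $K_n$ (when $G=C_n$) or $K_2\square K_{n-1}$ (when $G=C_2\oplus_1 C_{n-1}$, since the cycle matroid of a $1$-sum is the direct sum of cycle matroids, and the basis graph of a direct sum is the Cartesian product of basis graphs). Propositions~\ref{prop:2conn-m=n} and~\ref{prop:cycle+} then give $(n-2)!$ and $(n-2)!(n-3)!$ Hamiltonian cycles respectively through any edge, and a short check confirms that both quantities dominate $2^{n-3}$ for every $n\ge 3$.

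Otherwise, Lemma~\ref{lem:no2good} guarantees that every edge $\Ba\Bb$ of $\BG(M_G)$, say $\Bb=\Ba-e+g$, lies in at least two good cycles. The general strategy described in Section~\ref{sec:strategy} then yields
\[
\HC_{\Ba\Bb}(M_G)\;\ge\;\HCs(M_{G/e})\cdot 2\cdot \HCs(M_{G\setminus e}).
\]
Since contracting an edge preserves $2$-edge-connectedness (and loops are removed by convention), $G/e$ is a $2$-edge-connected graph of order $n-1\ge 3$, so the inductive hypothesis gives $\HCs(M_{G/e})\ge 2^{n-4}$. For the deletion factor I only need $\HCs(M_{G\setminus e})\ge 1$, which follows from Haff's theorem once I verify that $\BG(M_{G\setminus e})$ has at least three vertices, i.e., that $G\setminus e$ has at least three spanning trees. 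Multiplying gives $\HC_{\Ba\Bb}(M_G)\ge 2^{n-4}\cdot 2\cdot 1=2^{n-3}$.

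The spanning-tree check is where the small amount of real work lives, and it is the step I expect to be the main obstacle. If the size of $G$ is exactly $n+1$, then in the non-exceptional case $G$ is either a theta graph or a $1$-sum $C_k\oplus_1 C_{n-k+1}$ with both $k,\,n-k+1\ge 3$; in either subcase $G\setminus e$ is unicyclic with its unique cycle of length at least three, and thus has at least three spanning trees. If the size of $G$ is at least $n+2$, then $G\setminus e$ is connected with cyclomatic number at least two, and a brief fundamental-cycle argument (starting from a spanning tree $T$ and expanding with the at least two non-tree edges) produces three distinct spanning trees. Together with the exceptional-case bounds, this completes the induction.
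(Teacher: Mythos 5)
Your overall architecture is the same as the paper's (induction on $n$, the dichotomy of Lemma~\ref{lem:no2good}, Propositions~\ref{prop:2conn-m=n} and~\ref{prop:cycle+} for the exceptional graphs, and the good-cycle splicing for the rest), but the step you yourself flagged as the main obstacle is where the argument breaks. Your claim that $G\setminus e$ always has at least three spanning trees in the non-exceptional case is false. Take $n\ge 4$ and let $G$ be the theta graph consisting of a path $P_1$ of length $n-1$ from $u$ to $v$ together with two parallel edges between $u$ and $v$ (the paper's case~\emph{ii)} explicitly allows paths of length one, since multigraphs are in play throughout). This $G$ is $2$-edge-connected of size $n+1$ and is neither $C_n$ nor $C_2\oplus_1 C_{n-1}$, yet for any edge $e$ on $P_1$ the graph $G\setminus e$ is unicyclic with its unique cycle of length \emph{two}, hence has exactly two spanning trees. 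Then $\BG(M_{G\setminus e})=K_2$, which has no Hamiltonian cycle at all, so $\HCs(M_{G\setminus e})=0$ and your product bound $\HCs(M_{G/e})\cdot 2\cdot\HCs(M_{G\setminus e})$ collapses to $0$. Your subcase analysis for size exactly $n+1$ silently assumes the theta graph has at most one path of length one.

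The paper's proof anticipates exactly this: it only claims that $G''=G\setminus e$ has at least \emph{two} spanning trees, so that $\BGne$ is either $K_2$ or edge Hamiltonian, and it treats the $K_2$ case separately. When $\BGne=K_2$ its two vertices are necessarily the bases $\Bb$ and $\Bc$ of any good cycle $\GC=\Ba\Bb\Bc\Bd$, so the symmetric difference of $\GC$ with a Hamiltonian cycle of $\BGe$ through $\Ba\Bd$ already covers all of $Y$ and yields a Hamiltonian cycle of $\BG(M_G)$ --- no Hamiltonian cycle of $\BGne$ is consumed, and the count $2^{n-4}\cdot 2\cdot 1$ survives. Your proof needs this extra case (or an equivalent device) to be complete; everything else in it matches the paper.
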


\begin{proof}
  The proof is by induction on $n$.
  If $n = 3$, then $2^{n-3} = 1$ and the theorem follows from the edge Hamiltonicity of $\BG(M_G)$.
  So we may assume that $n \geq 4$.

  By Lemma~\ref{lem:no2good}, if there exists an edge in $\BG(M_G)$ not in two good cycles, 
  then $G$ is either $C_n$ or $C_2 \oplus_1 C_{n-1}$.
  If $G=C_n$, then  $\BG(M_G) = K_n$ and by 
  Proposition~\ref{prop:2conn-m=n} every edge of~$\BG(M_G)$ is in
  $(n-2)! \geq 2^{n-3}$ Hamiltonian cycles.
  If $G=C_2 \oplus_1 C_{n-1}$, then
  $\BG(M_G)=K_2 \square K_{n-1}$ and 
  by Proposition~\ref{prop:cycle+}  
  every edge of~$\BG(M_G)$ is in ${(n-2)!(n-3)!} \geq 2^{n-3}$
  Hamiltonian cycles. 
  Therefore, we may assume that~$G$ has at least $n+1$ edges and 
  every edge of $\BG(M_G)$ is in two good cycles.

  Let $\Ba\Bb$ be an edge of $\BG(M_G)$, say $\Bb = \Ba-e+g$.
  Let $G' = G / e$ and~${G'' = G \setminus e}$.
  As $G'$ is $2$-edge-connected of order $n-1 \ge 3$, by the induction hypothesis, 
  every edge of $\BGe$ is in~$2^{n-4}$ Hamiltonian cycles in~$\BGe$.
  As~$G''$ has $n \ge 4$ vertices and at least $n$ edges, $G''$ has at least two spanning trees, 
  and therefore $\BGne$ is either $K_2$ or edge Hamiltonian.

  Let $\GC=\Ba\Bb\Bc\Bd$ be a good cycle. 
  If~$\BGne$ is $K_2$, then
  the symmetric difference of $\GC$ and a Hamiltonian cycle of $\BGe$
  containing $\Ba\Bd$ is a Hamiltonian cycle of $\BG(M_G)$.
  On the other hand, if $\BGne$ is edge Hamiltonian, then
  the symmetric difference of $\GC$, a Hamiltonian cycle of~$\BGe$
  containing $\Ba\Bd$, and a Hamiltonian cycle of~$\BGne$ containing $\Bb\Bc$ 
  is a Hamiltonian cycle of $\BG(M_G)$.
  As every edge of $\BG(M_G)$ is in two good cycles, in either case
  we conclude that every edge of $\BG(M_G)$ is in~$2^{n-4}\cdot 2 \cdot 1 = 2^{n-3}$ Hamiltonian cycles.

\end{proof}



\subsection{\texorpdfstring{$k$}{k}-edge-connected graphs}

Now, we turn our attention to counting Hamiltonian cycles in the basis 
graph of the cycle matroid of $k$-edge-connected graphs for~$k \ge 3$.

\begin{lemma}\label{lem:C4-kconn}
  If $G$ is a $k$-edge-connected graph of order $n \ge 3$ for $k \ge 3$, 
  then there are $(n-2)(k-1)$ good cycles for every edge of $\BG(M_G)$.
\end{lemma}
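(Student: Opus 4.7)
The plan is to fix adjacent bases $\Ba$ and $\Bb$ with $\Bb=\Ba-e+g$, let $c$ denote the length of the fundamental cycle $C(g,\Ba)$, and count good cycles by summing contributions over each choice of $f\in\Ba-e$. By Remark~\ref{rem:disjointness}, cycles obtained from distinct $f$'s are automatically distinct, so it suffices to lower bound $|\SGC(f)|$ for each $f$ and add the bounds. The edges $f\in\Ba-e$ naturally split into two groups: the $c-2$ edges lying in $C(g,\Ba)-e$ and the remaining $n-c$ edges outside $C(g,\Ba)$.

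For each $f$ I would first analyze where the edges $e$, $g$, and $f$ lie with respect to the partition $(X,Y,Z)$ from Section~\ref{sec:goodcycles}. Using that $\Ba-e$ separates $X$ from $Y\cup Z$ and $\Ba-f$ separates $Z$ from $X\cup Y$, one checks (in both cases) that $f\in E[Y,Z]$; that $e\in E[X,Y]$ always; that $g\in E[X,Z]$ when $f\in C(g,\Ba)$; and that $g\in E[X,Y]$ when $f\notin C(g,\Ba)$. In particular $X$, $Y$, $Z$ are all nonempty, so $k$-edge-connectivity forces
\[
e(X,Y\cup Z)\ge k,\quad e(Y,X\cup Z)\ge k,\quad e(Z,X\cup Y)\ge k.
\]

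When $f\in C(g,\Ba)$, Facts~\ref{fact:YZ},~\ref{fact:XY}, and~\ref{fact:XZ} apply to every $\ell\in E[Y,Z]\setminus\{f\}$, every $h\in E[X,Y]\setminus\{e\}$, and every $j\in E[X,Z]\setminus\{g\}$ respectively, and the resulting $w$'s land in the three pairwise disjoint edge sets $E[Y,Z]$, $E[X,Y]$, $E[X,Z]$, so Remark~\ref{rem:disjointness} gives
\[
|\SGC(f)|\ge 2(e(Y,Z)-1)+(e(X,Y)-1)+(e(X,Z)-1)\ge 2k-4,
\]
by adding the cut bounds on $Y$ and $Z$. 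When $f\notin C(g,\Ba)$, Fact~\ref{fact:XY-Z} applies to every $w\in E[X\cup Y,Z]\setminus\{f\}$, giving $|\SGC(f)|\ge e(X\cup Y,Z)-1\ge k-1$.

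Summing over all $f\in\Ba-e$ and using Remark~\ref{rem:disjointness} to conclude disjointness of $\SGC(f)$'s, I obtain
\[
|\SGC|\ge(c-2)(2k-4)+(n-c)(k-1)=(n-2)(k-1)+(c-2)(k-3),
\]
which is at least $(n-2)(k-1)$ whenever $c\ge 2$ and $k\ge 3$. The main obstacle is the case analysis to locate $e,g,f$ correctly inside the partition $(X,Y,Z)$ and to verify that $X$, $Y$, $Z$ are all nonempty so that each of the three edge cuts really has size at least $k$; once that bookkeeping is carried out, the final inequality is just the observation that $(c-2)(k-3)\ge 0$.
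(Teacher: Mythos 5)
Your proposal is correct and follows essentially the same route as the paper: the same partition of good cycles over the choice of $f\in\Ba-e$, the same case split on whether $f$ lies in $C(g,\Ba)$, the same application of Facts~\ref{fact:XY-Z}--\ref{fact:XZ}, and the same summed cut inequalities yielding $2k-4$ in the in-cycle case. The only cosmetic difference is that you retain the per-$f$ bound $2k-4$ and absorb the surplus $(c-2)(k-3)$ at the end, whereas the paper immediately weakens $2k-4\ge k-1$ and multiplies by the $n-2$ choices of $f$; the conclusion is identical.
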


\begin{proof}
  Let $\Ba\Bb$ be an edge of $\BG(M_G)$, say $\Bb = \Ba -e + g$, and let $f \in \Ba-e$.
  First we show that there are $k-1$ good cycles in $\SGC(f)$. 
  
  As before, let~$X$ be the vertex set of the component of $\Ba-e$ that contains no end of~$f$, 
  let $Z$ be the vertex set of the component of $\Ba-f$ that contains no end of~$e$, and 
  let $Y = V(G) \setminus (X \cup Z)$.
  
  Suppose that $f$ is not in $C(g,\Ba)$. 
  Since $G$ is $k$-edge-connected, the edge set ${E[X\cup Y, Z]}$ contains $k-1$ edges distinct from $f$.
  It follows from Fact~\ref{fact:XY-Z} that there is a good cycle in $\SGC(f)$ for each of these edges.
  Besides, for different edges in $E[X\cup Y,Z]$, the corresponding bases $\Bc$ are different and so are their corresponding good cycles in~$\SGC(f)$.
  
  Now, suppose that $f$ is in $C(g,\Ba)$.
  It follows from Fact~\ref{fact:YZ} that there are two good cycles in $\SGC(f)$ for every edge in~$E[Y,Z]\setminus\{f\}$,  
  from Fact~\ref{fact:XY} that there is one good cycle in~$\SGC(f)$ for every edge in $E[X,Y]\setminus\{e\}$, 
  and from Fact~\ref{fact:XZ} that there is one good cycle in~$\SGC(f)$ for every edge in $E[X,Z]\setminus\{g\}$.
  Thus, as these good cycles are distinct, if~$(e(X,Y)-1) + (e(X,Z)-1) + 2(e(Y,Z)-1) \ge k-1$, we would indeed have $k-1$ good cycles in $\SGC(f)$. 
  
  By the $k$-edge-connectivity of $G$, we get that 
  \begin{eqnarray}
    |E[Y,X \cup Z]\setminus\{e,f\}| &= e(X,Y) + e(Y,Z) -2\; &\ge k-2,\label{eq:Y,XZ}\\ 
    |E[Z,X \cup Y]\setminus\{f,g\}| &= e(X,Z) + e(Y,Z) -2\; &\ge k-2.\label{eq:Z-XY}
  \end{eqnarray}
  Hence, summing~\eqref{eq:Y,XZ} and~\eqref{eq:Z-XY}, we get that $e(X,Y) + e(X,Z) + 2e(Y,Z) - 4 \ge 2k-4$, 
  and $2k-4 \ge k-1$ as $k \ge 3$.  So we have $k-1$ good cycles in $\SGC(f)$. 
  
  By Remark~\ref{rem:disjointness} and as there are $n-2$ choices for $f$, 
  there are $(n-2)(k-1)$ good cycles for every edge of $\BG(M_G)$. 
\end{proof}


\begin{lemma} \label{lem:BG>BGne}
  Let $G$ be a $3$-edge-connected graph of order $n \ge 3$ and let $e$
  be an edge of~$G$.  Then $\HCs(\MG) \ge \HCs(M_{G / e})$ and
  $\HCs(\MG) \ge \HCs(M_{G \setminus e})$.
\end{lemma}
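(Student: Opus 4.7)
The plan is to prove both inequalities simultaneously by showing that every edge $\epsilon = \Ba\Bb$ of $\BG(\MG)$ lies in at least $\HCs(M_{G/e})$ and at least $\HCs(M_{G \setminus e})$ Hamiltonian cycles. I fix the $(X, Y)$-bipartition of $\bases(\MG)$ induced by $e$, so that $\BG(\MG)[X] = \BG(M_{G/e})$ and $\BG(\MG)[Y] = \BG(M_{G \setminus e})$, and split into three cases by the positions of $\Ba, \Bb$. Throughout, I use that Haff's edge Hamiltonicity theorem applies to both $\BG(M_{G/e})$ (since $G/e$ is $3$-edge-connected on $n-1$ vertices) and $\BG(M_{G\setminus e})$ (since $G\setminus e$ is $2$-edge-connected on $n$ vertices), giving $\HCs(M_{G/e}), \HCs(M_{G\setminus e}) \ge 1$.

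If $\epsilon$ is a cross-edge, the general strategy of Section~\ref{sec:strategy} with pivot $e$ applies directly, and combining it with $|\calC_e(\Ba,\Bb)| \ge 2(n-2) \ge 1$ from Lemma~\ref{lem:C4-kconn} gives
\[
\HC_\epsilon(\MG) \;\ge\; \HCs(M_{G/e}) \cdot |\calC_e(\Ba,\Bb)| \cdot \HCs(M_{G \setminus e}) \;\ge\; \max\bigl(\HCs(M_{G/e}),\,\HCs(M_{G \setminus e})\bigr).
\]

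If both $\Ba, \Bb \in X$, then $\epsilon \in E(\BG(M_{G/e}))$. For the bound $\HC_\epsilon(\MG) \ge \HCs(M_{G/e})$, I extend each Hamiltonian cycle $H'$ of $\BG(M_{G/e})$ through $\epsilon$ to a distinct Hamiltonian cycle of $\BG(\MG)$ through $\epsilon$ as follows: select an edge $BB'$ of $H' - \epsilon$ admitting a transverse $4$-cycle $BB'D'D$ with $D, D' \in Y$ and $DD' \in E(\BG(M_{G \setminus e}))$, and form the symmetric difference of $H'$, this $4$-cycle, and any Hamiltonian cycle $H''$ of $\BG(M_{G \setminus e})$ through $DD'$; different $H'$ yield different resulting cycles since $H'$ is recovered by closing the Hamiltonian $X$-path of the constructed cycle. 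For the bound $\HC_\epsilon(\MG) \ge \HCs(M_{G \setminus e})$, I instead fix one such $H'$ and transverse $4$-cycle and vary $H''$ through $DD'$, obtaining $\HC_{DD'}(M_{G \setminus e}) \ge \HCs(M_{G \setminus e})$ distinct cycles. The case both $\Ba, \Bb \in Y$ is handled symmetrically, exchanging the roles of $X$ and $Y$.

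The main obstacle will be establishing the existence of the transverse $4$-cycle required in the second and third cases. For $BB' \in E(\BG(M_{G/e}))$ with $B' = B - f + g$ and $f, g \ne e$, the needed auxiliary element $h$ must lie in $C^*(e, B) \cap C^*(e, B') \setminus \{e, g\}$, that is, be an edge common to the two fundamental cocircuits through $e$. When $g$ has both endpoints on one side of the $e$-cut of $B$, the two cocircuits coincide as a single cut of $G$, and $3$-edge-connectivity supplies at least $|C^*(e, B)| - 1 \ge 2$ admissible $h$. When $g$ crosses the $e$-cut of $B$, I analyze the common refinement of $V(G)$ into the four blocks $X_B \cap X_{B'}$, $X_B \cap Y_{B'}$, $Y_B \cap X_{B'}$, $Y_B \cap Y_{B'}$ and apply the $3$-edge-connectivity to the cuts bordering each block to locate $h$; if no suitable $h$ arises for a particular $BB'$, I expect to exploit the freedom of choosing a different edge of $H' - \epsilon$ to avoid the pathological configuration.
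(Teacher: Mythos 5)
Your architecture coincides with the paper's: the same $(X,Y)$-bipartition, the same three cases (pink/cross, yellow/both-in-$X$, orange/both-in-$Y$), the same extension-by-a-transverse-$4$-cycle device, and the same way of distinguishing extensions by recovering $H'$ from the induced Hamiltonian path on $X$. The problem is exactly where you locate it, and you have not closed it. You insist that the $4$-cycle $BB'D'D$ be built from a \emph{common} exchange element $h\in C^*(e,B)\cap C^*(e,B')\setminus\{e\}$, and this intersection can indeed reduce to $\{e\}$: writing $B'=B-f+w$, if removing $f$ splits the $e$-side $S$ of the cut of $B-e$ into $S_1,S_2$ and $w$ rejoins $S_1$ to the other side $T$, then $C^*(e,B)=E[S_1\cup S_2,T]$ while $C^*(e,B')=E[S_2,S_1\cup T]$, whose common edges other than $e$ all lie in $E[S_2,T]$, which $3$-edge-connectivity does not force to contain anything besides $e$. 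Your fallback --- ``exploit the freedom of choosing a different edge of $H'$'' --- is a hope, not an argument, and for the bound $\HCs(\MG)\ge\HCs(M_{G/e})$ you need at least one usable edge on \emph{every} Hamiltonian cycle of $\BGe$ through $\epsilon$, which this does not supply. The paper's resolution is to drop the requirement of a common exchange element: it first finds $g\ne e,f$ joining the two components of $\Bd-e$, so $\Bc=\Bd-e+g$ is a basis; if $\Ba-e+g$ fails to be a basis it shows (by analysing $C(g,\Ba)\,\Delta\,C(g,\Bd)$, which must be the fundamental cycle $C(w,\Ba)$ and hence contains $e$) that $\Bb=\Ba-e+w$ is a basis, and $\Bb\,\Delta\,\Bc=\{f,g\}$ still has size two, so $\Ba\Bb\Bc\Bd$ is a good cycle. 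That asymmetric choice is the missing idea.

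A secondary but real defect is the claim that the orange case is ``handled symmetrically.'' It is not: deletion and contraction are not interchangeable here, and the paper does \emph{not} prove that every orange edge lies in a good cycle --- only that every Hamiltonian cycle of $\BGne$ contains two edges each lying in one, via a separate argument that first secures an edge $f$ of $G$ not parallel to $e$ together with a basis avoiding $f$, and then runs the fundamental-circuit analysis with $C(e,\Bb)$ and $C(e,\Bc)$ in place of cocircuits. You would need to supply that argument rather than appeal to symmetry.
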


\begin{proof}
  Let $X = \{B \in \bases(\MG) \colon\, e \in B\}$ and $Y = \{B \in \bases(\MG) \colon\, e \notin B\}$.
  Note that $(X,Y)$ is a bipartition of the vertices (bases) of $\BG(M_G)$.
  Let~$G' = G/e$ and~$G'' = G\setminus e$.
  As $G'$ is $3$-edge-connected of order $n - 1 \ge 2$, the basis graph $\BGe$ has at least three vertices and is edge Hamiltonian.
  Similarly, $\BGne$ also has at least three vertices and thus is edge Hamiltonian.
  There is a one-to-one correspondence between the bases in $X$ and the bases of $\BGe$ and between the bases in $Y$ and the bases of $\BGne$.
  
  The edge set of $\BG(M_G)$ can be partitioned into:
  \begin{enumerate*}[label={\it(\roman*)}]
  \item edges with both ends in~$\BGe$, called {\em yellow edges},
  \item edges with one end in~$\BGe$ and the other one in $\BGne$, called {\em pink edges}, and
  \item edges with both ends in $\BGne$, called {\em orange edges}.
  \end{enumerate*}
  
  \setcounter{case}{0}
  \begin{case}
    Hamiltonian cycles passing through a pink edge.
  \end{case}
  
  Let $\Ba\Bb$ be a pink edge.
  For every good cycle $\GC=\Ba\Bb\Bc\Bd$, the edge $\Ba\Bd$ is an edge of $\BGe$ and $\Bb\Bc$ is an edge of $\BGne$.
  The symmetric difference of a Hamiltonian cycle of $\BGe$ containing the edge $\Ba\Bd$, the good cycle $\GC$, and
  a Hamiltonian cycle of $\BGne$ containing the edge $\Bb\Bc$ is a Hamiltonian cycle of~$\BGG$ containing the edge $\Ba\Bb$. 
  By Lemma~\ref{lem:C4-kconn}, there exists at least one such good cycle $\GC$. 
  Thus $\HC_{\Ba\Bb}(\MG) \ge \HCs(\MGe)$ and   $\HC_{\Ba\Bb}(\MG) \ge \HCs(\MGne)$.
  
  \begin{case}
    Hamiltonian cycles passing through a yellow edge.
  \end{case}
  
  First we prove that every yellow edge belongs to a good cycle $\GC$.
  Let $\Ba\Bd$ be a yellow edge, say $\Bd = \Ba - f + w$.
  The subgraph $\Bd - e$ has exactly two components.
  Since $G$ is $3$-edge-connected, there exists an edge $g$ other than $e$ and $f$ connecting the two components of $\Bd - e$.
  Therefore $e \in C(g, \Bd)$ and $\Bc = \Bd - e + g$ is a basis.
  If~$\Ba -e +g$ is a basis, we set $\Bb = \Ba - e + g$ and we have a good cycle $\GC = \Ba\Bb\Bc\Bd$.
  So, we may assume that $\Ba - e + g$ is not a basis.  
  Thus $e \not\in C(g, \Ba)$.
  Since $e \in C(g, \Bd)$, we have that $C(g,\Ba) \neq C(g,\Bd)$.
  This implies that $f \in C(g, \Ba)$ and $w \in C(g, \Bd)$ since $\Ba$ and $\Bd$ only differ by $f$ and $w$.
  Hence, we know that~$C(g, \Ba) \Delta C(g, \Bd)$, the symmetric difference of $C(g, \Ba)$ and $C(g, \Bd)$,
  \begin{enumerate*}[label=({\em \roman*})]
  \item contains $e,f,w$ but not $g$;
  \item is contained in~$\Ba \cup \Bd = \Ba + w$; and
  \item contains a cycle.
  \end{enumerate*}
  Therefore, the only cycle contained in~$C(g, \Ba) \Delta C(g, \Bd)$ is $C(w, \Ba)$.
  In this case, $\Bb = \Ba - e + w$ is a basis and there is also a good cycle $\GC = \Ba\Bb\Bc\Bd$.
  
  Now, every yellow edge $\Ba\Bd$ is in a Hamiltonian cycle $C$ of~$\BGe$. 
  Let us {\em extend} $C$ to a Hamiltonian cycle of $\BGM$ as follows. 
  Consider an edge $\Ba'\Bd'$ in $C$ other than~$\Ba\Bd$.
  The symmetric difference of $C$, a good cycle containing the yellow edge $\Ba'\Bd'$, say $\Ba'\Bb'\Bc'\Bd'$, 
  and a Hamiltonian cycle of $\BGne$ containing the edge $\Bb'\Bc'$ is an extension of $C$ to 
  a Hamiltonian cycle of $\BGG$ containing the edge~$\Ba\Bd$.
  Thus $\HC_{\Ba\Bd}(\MG) \ge \HCs(\MGne)$ because~$\Bb'\Bc'$ is in~$\HCs(\MGne)$ Hamiltonian cycles of~$\BGne$.
  
  On the other hand, since $\BGe$ is a simple graph, two Hamiltonian cycles of~$\BGe$ passing through the edge $\Ba\Bd$, 
  say $C$ and $C'$, differ in at least two edges.
  Therefore $C$ and $C'$ are extended to different Hamiltonian cycles of~$\BGG$.
  Hence $\HC_{\Ba\Bd}(\MG) \ge \HCs(\MGe)$. %
  
  \begin{case}
    Hamiltonian cycles passing through an orange edge.
  \end{case}
  
  First we prove that every Hamiltonian cycle $C$ in $\BGne$ contains two edges, say~$\Bb\Bc$ and $\Bb'\Bc'$, each of which is in a good cycle in~$\SGC$.
  As $G$ has at least three vertices and is $3$-edge-connected, there exist an edge $f$ in~$G$ not parallel to $e$ and a basis of $\MGne$ not containing $f$.
  By traversing $C$, we pass through edges $\Bb\Bc$ and $\Bc'\Bb'$ such that ${\Bc = \Bb - f + w}$ and $\Bb' = \Bc' + f - w'$ for some elements $w$ and $w'$.
  We shall prove that there exists a $\GC$ containing $\Bb\Bc$ and a $\GC'$ containing $\Bb'\Bc'$. 
  
  As $f$ is not parallel to $e$, there exists an edge $g \in C(e,\Bb)$ other than $f$.
  Therefore $\Ba = \Bb - g + e$ is a basis.
  If $\Bc - g + e$ is also a basis, we set $\Bd = \Bc - g + e$ and obtain a good cycle~$\GC$ with $\Bb\Bc$.
  So, we may assume that $\Bc - g + e$ is not a basis.
  Thus $g \notin C(e, \Bc)$.
  Since $g \in C(e, \Bb)$, we have that $C(e, \Bb) \neq C(e, \Bc)$.
  This implies that $f \in C(e, \Bb)$ and $w \in C(e, \Bc)$ since $\Bb$ and $\Bc$ only differ by $f$ and $w$.
  In this case $\Bd = \Bc - w + e$ is a basis and we obtain a good cycle~$\GC$ with $\Bb\Bc$.
  This completes the proof for $\Bb\Bc$. The proof for $\Bb'\Bc'$ is analogous.
  
  Every orange edge $\Bb\Bc$ is in a Hamiltonian cycle $C$ of $\BGne$.
  Since $C$ contains two edges, each in a good cycle, there exists an edge $\Bb'\Bc'$, distinct from $\Bb\Bc$, in a good cycle $\GC'$, 
  say $\GC' = \Ba'\Bb'\Bc'\Bd'$. 
  The symmetric difference of $C$, the good cycle $\GC'$, and a Hamiltonian cycle of $\BGe$ passing through the edge $\Ba'\Bd'$ is 
  a Hamiltonian cycle of $\BGG$ containing the edge $\Bb\Bc$.
  Because there are $\HCs(\MGe)$ Hamiltonian cycles passing through the edge $\Ba'\Bd'$, we have that $\HC_{\Bb\Bc}(\MG) \ge \HCs(\MGe)$.
  As~$\Bb\Bc$ is in~$\HCs(\MGne)$ Hamiltonian cycles of $\BGne$, and two distinct Hamiltonian cycles differ in at least two edges, 
  we have that $\HC_{\Bb\Bc}(\MG) \ge \HCs(\MGne)$. 
\end{proof}

In order to give a bound on $\HCs(\MG)$, we define the function
$$\hc{n,k} =\min\{\HCs(\MG) \colon\, \text{$G$ is a $k$-edge-connected graph of order $n$}\}.$$

\begin{proposition}\label{prop:recurrence}
  For $k$, $n \ge 3$,  $\hc{n,k} \ge (n-2)(k-1)\hc{n-1,k}\hc{n,k-1}$.
\end{proposition}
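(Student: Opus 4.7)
The plan is to invoke directly the general strategy of Section~\ref{sec:strategy}, feeding in the good-cycle count of Lemma~\ref{lem:C4-kconn}. Fix a $k$-edge-connected graph $G$ of order $n$ and an arbitrary edge $\Ba\Bb$ of $\BG(\MG)$ with $\Bb = \Ba - e + g$. Let $G' = G / e$ and $G'' = G \setminus e$. Two standard edge-connectivity observations come first: contracting preserves edge-connectivity, since every edge-cut of $G'$ lifts to an edge-cut of $G$ of the same size in which the endpoints of $e$ lie on the same side, so $G'$ is $k$-edge-connected of order $n-1$; and deleting an edge decreases edge-connectivity by at most one, so $G''$ is $(k-1)$-edge-connected of order $n$. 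Because $k \ge 3$ and $n \ge 3$, both $\BGe$ and $\BGne$ fall under Haff's theorem (with a small separate check when $n = 3$, where $G'$ consists of two vertices joined by $k \ge 3$ parallel edges, so $\BGe = K_k$ is certainly edge Hamiltonian), hence both have at least three vertices and are edge Hamiltonian.

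Next, apply the strategy of Section~\ref{sec:strategy}: for every good cycle $\GC = \Ba\Bb\Bc\Bd \in \calC(\Ba,\Bb)$, every Hamiltonian cycle of $\BGe$ through $\Ba\Bd$, and every Hamiltonian cycle of $\BGne$ through $\Bb\Bc$, the corresponding symmetric difference is a Hamiltonian cycle of $\BG(\MG)$ through $\Ba\Bb$, and distinct triples yield distinct Hamiltonian cycles. Therefore
\[
\HC_{\Ba\Bb}(\MG) \;\ge\; \HCs(\MGe)\cdot|\calC(\Ba,\Bb)|\cdot\HCs(\MGne).
\]
By Lemma~\ref{lem:C4-kconn}, $|\calC(\Ba,\Bb)| \ge (n-2)(k-1)$. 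From the edge-connectivity observations above and the definition of $\hc{\cdot,\cdot}$, we obtain $\HCs(\MGe) \ge \hc{n-1,k}$ and $\HCs(\MGne) \ge \hc{n,k-1}$. Multiplying these three bounds and taking the minimum over all edges $\Ba\Bb$ of $\BG(\MG)$ and over all $k$-edge-connected graphs $G$ of order $n$ yields the claimed inequality.

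There is no substantial obstacle here: the proposition is essentially a bookkeeping consequence of the general strategy set up in Section~\ref{sec:strategy}, of Lemma~\ref{lem:C4-kconn} (which did the real work of counting good cycles), and of the elementary behavior of edge-connectivity under contraction and deletion. The only care needed is to verify that the two subproblems $(G',G'')$ remain within regimes for which $\hc{n-1,k}$ and $\hc{n,k-1}$ are meaningfully at least one, which follows from the connectivity bounds above together with Haff's theorem.
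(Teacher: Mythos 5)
Your proposal is correct and follows essentially the same route as the paper: apply Lemma~\ref{lem:C4-kconn} to get $(n-2)(k-1)$ good cycles for the edge $\Ba\Bb$, combine each with Hamiltonian cycles of $\BGe$ through $\Ba\Bd$ and of $\BGne$ through $\Bb\Bc$ via the symmetric-difference construction of Section~\ref{sec:strategy}, and then use that $G'$ is $k$-edge-connected of order $n-1$ and $G''$ is $(k-1)$-edge-connected of order $n$ to bound the two factors by $\hc{n-1,k}$ and $\hc{n,k-1}$. The only cosmetic difference is that the paper starts from a graph attaining the minimum $\hc{n,k}$ rather than taking the minimum at the end; the extra remarks you add on edge-connectivity under contraction and deletion and on Haff's theorem are correct but not needed beyond what the paper already assumes.
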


\begin{proof}
Let $G$ be a $k$-edge-connected graph of order $n$ such that $\HCs(\MG) = \hc{n,k}$. 
By Lemma~\ref{lem:C4-kconn}, there are $(n-2)(k-1)$ good cycles for every edge of~$\BGG$.
Let $\Ba\Bb$ be an edge of~$\BGG$, say $\Bb = \Ba - e + g$, and let $G' = G / e$ and $G'' = G \setminus e$.
The symmetric difference of a good cycle $\GC=\Ba\Bb\Bc\Bd$, a Hamiltonian cycle of $\BGe$ containing $\Ba\Bd$, 
and a Hamiltonian cycle of $\BGne$ containing $\Bb\Bc$ is a Hamiltonian cycle of $\BGG$ containing $\Ba\Bb$.
Hence, $\Ba\Bb$ is in ${(n-2)(k-1)\HCs(\MGe)\HCs(\MGne)}$ Hamiltonian cycles of~$\BGG$.
Now, as $G'$ is {$k$-edge}-connected of order $n-1$, we have that $\HCs(\MGe) \ge \hc{n-1,k}$
and, as $G''$ is $(k-1)$-edge-connected of order $n$, we have that $\HCs(\MGne) \ge \hc{n,k-1}$.
Therefore we conclude that $\hc{n,k} = \HCs(\MG) \ge (n-2)(k-1)\hc{n-1,k}\hc{n,k-1}$.
\end{proof}

The \defi{superfactorial} $\sfac{x}$ of a positive integer $x$ is the number $x!(x-1)!\cdots0!$ 

\begin{theorem}\label{thm:hc(3,k)}
For $k \ge 3$,  $\hc{3,k} \ge \sfac{k-1}$.
\end{theorem}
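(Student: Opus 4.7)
I would prove the bound by induction on $k \ge 3$, using Proposition~\ref{prop:recurrence} specialized to $n=3$, namely
\[
\hc{3,k} \ge (k-1)\,\hc{2,k}\,\hc{3,k-1}.
\]
The two ingredients I need beyond the inductive hypothesis are a lower bound on $\hc{2,k}$ and a verification of the base case $k=3$.

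\textbf{Bounding $\hc{2,k}$.} A $k$-edge-connected graph of order $2$ with no loops is just $m \ge k$ parallel edges between its two vertices, so its cycle matroid is the uniform matroid $U_{1,m}$. The bases are the singletons and any two distinct singletons have symmetric difference of size two, so $\BG(M_G) = K_m$. By Proposition~\ref{prop:2conn-m=n}, every edge of $K_m$ is in $(m-2)! \ge (k-2)!$ Hamiltonian cycles, and therefore $\hc{2,k} \ge (k-2)!$.

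\textbf{Base case $k = 3$.} We must show $\hc{3,3} \ge \sfac{2} = 2$. Using the recurrence (which is valid for $n=k=3$ since Lemma~\ref{lem:C4-kconn} applies),
\[
\hc{3,3} \ge (3-2)(3-1)\,\hc{2,3}\,\hc{3,2} \ge 2 \cdot 1 \cdot 1 = 2,
\]
where $\hc{2,3} \ge (3-2)! = 1$ by the paragraph above and $\hc{3,2} \ge 2^{3-3} = 1$ by Theorem~\ref{thm:2-conn}. This matches $\sfac{2}$.

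\textbf{Inductive step.} For $k \ge 4$, assume $\hc{3,k-1} \ge \sfac{k-2}$. Combining the recurrence with the $\hc{2,k}$ estimate,
\[
\hc{3,k} \ge (k-1)\cdot (k-2)! \cdot \sfac{k-2} = (k-1)!\cdot \sfac{k-2} = \sfac{k-1},
\]
which completes the induction.

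\textbf{Expected obstacle.} The only delicate point is handling the two ``boundary'' values $\hc{2,k}$ and $\hc{3,2}$, which sit just outside the $n,k \ge 3$ hypothesis of Proposition~\ref{prop:recurrence}. Both are resolved by direct, matroid-specific arguments (reducing to $K_m$ via the uniform matroid, and invoking Theorem~\ref{thm:2-conn}), after which the superfactorial bound telescopes immediately out of the recurrence.
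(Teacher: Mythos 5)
Your proposal is correct and follows essentially the same route as the paper: induction on $k$, the factor $k-1$ from the good cycles of Lemma~\ref{lem:C4-kconn}, the bound $(k-2)!$ coming from the fact that the basis graph of the order-two contraction $G/e$ is a complete graph on at least $k$ vertices, and the inductive bound $\hc{3,k-1}\ge\sfac{k-2}$ for the deletion. The only cosmetic difference is that you package the symmetric-difference argument as an invocation of Proposition~\ref{prop:recurrence} at $n=3$ (with the boundary values $\hc{2,k}$ and $\hc{3,2}$ handled explicitly), whereas the paper re-derives that step inline.
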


\begin{proof}
We use induction on $k$. 
Let $G$ be a $k$-edge-connected graph of order three such that $\HCs(\MG)=\hc{3,k}$. 
Let $\Ba\Bb$ be an edge of $\BGG$, say $\Bb = \Ba - e + g$.
Let $G' = G/e$ and $G'' = G \setminus e$.
The graph $G'$ is $k$-edge-connected of order two. 

If $k = 3$, then $\BGe$ has three vertices and is edge Hamiltonian.
The graph $G''$ has three vertices and is $2$-edge-connected, therefore $\BGne$ has three vertices and is edge Hamiltonian.
By Lemma~\ref{lem:C4-kconn}, the edge $\Ba\Bb$ is in two good cycles in~$\SGC$.
The symmetric difference of a good cycle $\GC=\Ba\Bb\Bc\Bd$, a Hamiltonian cycle of $\BGe$ containing $\Ba\Bd$,  
and a Hamiltonian cycle of $\BGne$ containing $\Bb\Bc$ is a Hamiltonian cycle of $\BGG$ containing $\Ba\Bb$.
Hence, every edge of $\BGG$ is in two Hamiltonian cycles and $\hc{3,k} = \HCs(\MG) \ge 2$.

Suppose $k > 3$. By the induction hypothesis, $\hc{3,k-1} \ge \sfac{k-2}$.
The basis graph $\BGe$ is a complete graph on at least $k$ vertices, thus $\HCs(\MGe) \ge (k-2)!$ 
Since~$G''$ is a $(k-1)$-edge-connected graph of order three,
every edge of~$\BGne$ is in $\hc{3,k-1}$ Hamiltonian cycles.
By Lemma~\ref{lem:C4-kconn}, the edge $\Ba\Bb$ is in $k-1$ good cycles in $\SGC$.
The symmetric difference of a good cycle $\GC=\Ba\Bb\Bc\Bd$, a Hamiltonian cycle of $\BGe$ containing $\Ba\Bb$,  
and a Hamiltonian cycle of $\BGne$ containing $\Bb\Bc$ is a Hamiltonian cycle of $\BGG$ containing $\Ba\Bb$.
Hence, every edge of $\BGG$ is in $(k-2)!(k-1)\hc{3,k-1} \ge (k-1)!\sfac{k-2} = \sfac{k-1}$ Hamiltonian cycles.
Thus, $\HCs(\MG) = \hc{3,k} \geq \sfac{k-1}$.
\end{proof}

\begin{theorem}\label{thm:hc(n,3)}
For $n \ge 3$,  $\hc{n,3} \ge (n-2)!\,2^{\binom{n-1}{2}}$.
\end{theorem}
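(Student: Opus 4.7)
The plan is straightforward: induct on $n$, feeding the $2$-edge-connected bound of Theorem~\ref{thm:2-conn} into the recurrence of Proposition~\ref{prop:recurrence} specialized to $k=3$. Essentially all the real work has already been done; what remains is routine bookkeeping and one application of Pascal's identity.

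For the base case $n=3$, Theorem~\ref{thm:hc(3,k)} at $k=3$ gives $\hc{3,3}\ge \sfac{2}=2$, which matches the target value $(3-2)!\,2^{\binom{2}{2}}=2$.

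For the inductive step, suppose $\hc{n-1,3}\ge (n-3)!\,2^{\binom{n-2}{2}}$. Proposition~\ref{prop:recurrence} with $k=3$ yields
\[
\hc{n,3}\;\ge\;(n-2)\cdot 2\cdot \hc{n-1,3}\cdot \hc{n,2}.
\]
Theorem~\ref{thm:2-conn} asserts that every edge of the basis graph of a $2$-edge-connected graph of order $n$ lies in at least $2^{n-3}$ Hamiltonian cycles, so $\hc{n,2}\ge 2^{n-3}$. Plugging both bounds in,
\[
\hc{n,3}\;\ge\;(n-2)\cdot 2^{n-2}\cdot (n-3)!\cdot 2^{\binom{n-2}{2}}\;=\;(n-2)!\cdot 2^{\,(n-2)+\binom{n-2}{2}}.
\]
Pascal's identity gives $\binom{n-1}{2}=\binom{n-2}{2}+(n-2)$, and the desired inequality $\hc{n,3}\ge (n-2)!\,2^{\binom{n-1}{2}}$ follows.

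The only subtlety worth double-checking is that Proposition~\ref{prop:recurrence} is applicable at $k=3$, which requires an extremal $G$ achieving $\hc{n,3}$ such that $G\setminus e$ is $2$-edge-connected of order $n$ (so that $\hc{n,2}$ is a legitimate lower bound on $\HCs(M_{G\setminus e})$) and $G/e$ is $3$-edge-connected of order $n-1$. Both facts are already implicit in the statement of Proposition~\ref{prop:recurrence}, so no extra argument is required. There is no genuine obstacle here; the proof is a two-line induction built entirely on the previously established Theorem~\ref{thm:2-conn} and Proposition~\ref{prop:recurrence}.
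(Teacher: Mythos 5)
Your proof is correct and follows essentially the same route as the paper's: the paper's own argument is an induction on $n$ whose inductive step is exactly Proposition~\ref{prop:recurrence} at $k=3$ unpacked inline (the $2(n-2)$ good cycles from Lemma~\ref{lem:C4-kconn}, the bound $2^{n-3}$ on $\HCs(M_{G\setminus e})$ from Theorem~\ref{thm:2-conn}, and the induction hypothesis on $G/e$), with the identical exponent computation $\binom{n-2}{2}+(n-2)=\binom{n-1}{2}$. Your base case via Theorem~\ref{thm:hc(3,k)} at $k=3$ is likewise the same argument the paper writes out directly, so the only difference is that you cite the packaged statements rather than re-deriving them.
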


\begin{proof}
We use induction on $n$. 
Let $G$ be a $3$-edge-connected graph of order $n \ge 3$ such that $\HCs(\MG)=\hc{n,3}$. 
Let~$\Ba\Bb$ be an edge of $\BGG$, say~$\Bb = \Ba - e + g$.
Let $G' = G/e$ and~$G'' = G \setminus e$.
Note that~$G'$ is a $3$-edge-connected graph of order $n-1$. 

If $n = 3$, then $G'$ is a $3$-edge-connected graph of order two. 
So~$\BGe$ is the complete graph~$K_m$, where $m \ge 3$ is the number of edges of $G'$. 
(Remember that we remove loops of $G'$ if any.)
The graph~$G''$ has three vertices and is $2$-edge-connected, therefore $\BGne$ has at least three vertices and is edge Hamiltonian.
By Lemma~\ref{lem:C4-kconn}, the edge $\Ba\Bb$ is in two good cycles in $\SGC$.
The symmetric difference of a good cycle $\GC=\Ba\Bb\Bc\Bd$, a Hamiltonian cycle of $\BGe$ containing $\Ba\Bd$, 
and a Hamiltonian cycle of $\BGne$ containing~$\Bb\Bc$ is a Hamiltonian cycle of $\BGG$ containing $\Ba\Bb$.
Hence, every edge of $\BGG$ is in two Hamiltonian cycles, and so $\HCs(\MG) = \hc{3,3} \geq 2$.

Suppose $n > 3$.  By the induction hypothesis, $\hc{n-1,3} \ge (n-3)!\,2^{\binom{n-2}{2}}$.
Since~$G'$ is a $3$-edge-connected graph of order $n-1$, 
every edge of~$\BGe$ is in $\hc{n-1,3}$ Hamiltonian cycles.
By Theorem~\ref{thm:2-conn}, as~$G''$ is $2$-edge-connected of order $n$, 
every edge of~$\BGne$ is in $2^{n-3}$ Hamiltonian cycles.
By Lemma~\ref{lem:C4-kconn}, the edge $\Ba\Bb$ is in ${2(n-2)}$ good cycles in $\SGC$.
The symmetric difference of a good cycle~$\GC=\Ba\Bb\Bc\Bd$, a Hamiltonian cycle of~$\BGe$ containing $\Ba\Bd$, 
and a Hamiltonian cycle of $\BGne$ containing $\Bb\Bc$ is a Hamiltonian cycle of~$\BGG$ containing~$\Ba\Bb$. 
Therefore, 
every edge of~$\BGG$ is in $(n-3)!\,2^{\binom{n-2}{2}}2(n-2)2^{n-3} = (n-2)!\,2^{\binom{n-1}{2}}$ Hamiltonian cycles.
Thus, $\HCs(\MG) = \hc{n,3} \geq (n-2)!\,2^{\binom{n-1}{2}}$.
\end{proof}


The next theorem gives a bound on $\hc{n,k}$ for $n \ge 4$ and $k \ge 4$.  

\begin{theorem}\label{thm:hc(n,k)} 
For $n$, $k \ge 4$, 
$$ \hc{n,k} \ \ge \ \frac{2^{\binom{n+k-4}{n-3}} \cdot 3^{\binom{n+k-7}{k-3}}}{(n-1)k}\,
                    \prod\limits^{k}_{r = 4} \bigl(r\sfac{r-1}\bigr)^{\binom{n+k-4-r}{n-4}} \cdot \prod\limits^{n}_{s = 4} (s-1)!^{\binom{n+k-4-s}{k-4}}. $$
\end{theorem}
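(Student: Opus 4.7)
The plan is to prove the theorem by induction on $n+k$, iterating Proposition~\ref{prop:recurrence} and absorbing the boundaries $n=3$ and $k=3$ via Theorems~\ref{thm:hc(3,k)} and~\ref{thm:hc(n,3)}. Let $B(n,k)$ denote the right-hand side of the claimed inequality. The base case $n=k=4$ follows at once from $\hc{4,4}\ge 2\cdot 3\cdot \hc{3,4}\cdot \hc{4,3} \ge 6\cdot \sfac{3}\cdot 2!\cdot 2^{3} = 1152$, which is matched by a direct evaluation $B(4,4)=1152$.

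For the inductive step at $(n,k)$ with $n+k\ge 9$ and $n,k\ge 4$, Proposition~\ref{prop:recurrence} gives $\hc{n,k}\ge (n-2)(k-1)\hc{n-1,k}\hc{n,k-1}$, and I would split into three cases: (a) $n,k\ge 5$, where the induction hypothesis applies to both factors; (b) $n=4$, $k\ge 5$, where $\hc{3,k}$ is bounded via Theorem~\ref{thm:hc(3,k)} while the hypothesis gives $\hc{4,k-1}\ge B(4,k-1)$; (c) $n\ge 5$, $k=4$, symmetric to (b) using Theorem~\ref{thm:hc(n,3)}. In each case the induction reduces to a multiplicative identity on $B$: respectively, $B(n,k) = (n-2)(k-1)\,B(n-1,k)\,B(n,k-1)$ in case (a), $B(4,k) = 2(k-1)\,\sfac{k-1}\cdot B(4,k-1)$ in case (b), and $B(n,4) = 3(n-2)\cdot (n-2)!\cdot 2^{\binom{n-1}{2}}\cdot B(n-1,4)$ in case (c).

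The heart of the argument is case (a), which unpacks via Pascal's identity $\binom{a}{b}=\binom{a-1}{b}+\binom{a-1}{b-1}$ applied separately to each exponent occurring in $B$: the splitting of $\binom{n+k-4}{n-3}$ matches the exponents of~$2$ in $B(n-1,k)$ and $B(n,k-1)$; that of $\binom{n+k-7}{k-3}$ matches the exponents of~$3$; and the splittings of $\binom{n+k-4-r}{n-4}$ and $\binom{n+k-4-s}{k-4}$ handle the two products termwise. The denominator ratio $(n-2)k\cdot(n-1)(k-1)/((n-1)k)=(n-2)(k-1)$ provides the prescribed front factor. The main obstacle will be careful bookkeeping at the endpoints of the two products: when $r=k$ (respectively $s=n$) the term appears in $B(n,k)$ but not in $B(n,k-1)$ (respectively $B(n-1,k)$); one verifies that the missing Pascal summand is precisely the one that vanishes, namely $\binom{n-5}{n-4}=0$ (respectively $\binom{k-5}{k-4}=0$), so the identity remains balanced. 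Cases (b) and (c) then follow by computing the corresponding ratios directly from the closed form of $B$, with both products collapsing telescopically.
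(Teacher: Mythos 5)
Your proposal is correct and takes essentially the same route as the paper's proof: induction on $n+k$ via Proposition~\ref{prop:recurrence}, with base case $n=k=4$, the boundary cases $n=4$ and $k=4$ handled by Theorems~\ref{thm:hc(3,k)} and~\ref{thm:hc(n,3)}, and the main case reduced to Pascal's identity on each exponent, including the vanishing endpoint summands $\binom{n-5}{n-4}=\binom{k-5}{k-4}=0$. Nothing further is needed.
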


\begin{proof}
The proof is by induction on $n+k$ and uses repeatedly Proposition~\ref{prop:recurrence}. 

For $n=k=4$, we apply Theorem~\ref{thm:hc(3,k)} to $\hc{3,4}$ and Theorem~\ref{thm:hc(n,3)} to $\hc{4,3}$:
\begin{eqnarray*}
  \hc{4,4} & \ \ge \ & 2 \cdot 3\,\hc{3,4}\,\hc{4,3} \ \ge \ 6\,\bigl[\sfac{3}\bigr] \cdot \bigl[2!\,2^{\binom{3}{2}}\bigr]
  \ = \ 3! \cdot \sfac{3}\,2 \cdot 2^3 \\
  & \  =  \ & 2^{\binom{4}{1}} \cdot \frac{3 \cdot 4}{3 \cdot 4} \, \sfac{3} \cdot 3!
  \ = \ \frac{2^{\binom{4}{1}} \cdot 3^{\binom{1}{1}}}{3 \cdot 4} \bigl(4\sfac{3}\bigr)^{\binom{0}{0}} \cdot 3!^{\binom{0}{0}}.
\end{eqnarray*}

The bound on $\hc{4,k}$ for $k \ge 5$ comes from applying Theorem~\ref{thm:hc(3,k)} to $\hc{3,k}$ and the induction hypothesis on $\hc{4,k-1}$:
\begin{eqnarray*}
\hc{4,k} & \geq &  2(k-1)\,\bigl[\sfac{k-1}\bigr] \cdot \Biggl[ \frac{2^{\binom{k-1}{1}} \cdot 3^{\binom{k-4}{k-4}}}{3(k-1)}\cdot\biggl(\,\prod\limits^{k-1}_{r = 4} \bigl(r\sfac{r-1}\bigr)^{\binom{k-1-r}{0}}\biggr) \cdot 3!^{\binom{k-5}{k-5}} \Biggr] \\
         &   =  & \frac23\,\sfac{k-1} \cdot 2^{k-1} \cdot 3^{\binom{k-3}{k-3}} \cdot \biggl(\,\prod\limits^{k-1}_{r = 4} \bigl(r\sfac{r-1}\bigr)^{\binom{k-r}{0}}\biggr) \cdot 3!^{\binom{k-4}{k-4}} \\
         &   =  & \frac{2^{\binom{k}{1}} \cdot 3^{\binom{k-3}{k-3}}}{3k} \cdot \biggl(\,\prod\limits^{k}_{r = 4} \bigl(r\sfac{r-1}\bigr)^{\binom{k-r}{0}}\biggr) \cdot 3!^{\binom{k-4}{k-4}}.
\end{eqnarray*}

Similarly, the bound on $\hc{n,4}$ for $n \ge 5$ comes from applying the induction hypothesis on $\hc{n-1,4}$ and Theorem~\ref{thm:hc(n,3)} to $\hc{n,3}$:
\begin{eqnarray*}
\hc{n,4} & \ge & (n-2)3\, \Biggl[\frac{2^{\binom{n-1}{n-4}} \cdot 3^{\binom{n-4}{1}}}{(n-2)4} \cdot \bigl(4\sfac{3}\bigr)^{\binom{n-5}{n-5}} \cdot \prod\limits^{n-1}_{s = 4} (s-1)!^{\binom{n-1-s}{0}}\Biggr] \cdot \Bigl[(n-2)!\,2^{\binom{n-1}{2}}\Bigr] \\
         &  =  & \frac34\,2^{\binom{n-1}{n-4}} \cdot 3^{n-4} \cdot \bigl(4\sfac{3}\bigr)^{\binom{n-4}{n-4}} \cdot \biggl(\,\prod\limits^{n-1}_{s = 4} (s-1)!^{\binom{n-s}{0}}\biggr) \cdot (n-2)!\,2^{\binom{n-1}{n-3}} \\
         &  =  & \frac{2^{\binom{n-1}{n-4}+\binom{n-1}{n-3}} \cdot 3^{n-3}}{(n-1)4}\,(n-1) \cdot \bigl(4\sfac{3}\bigr)^{\binom{n-4}{n-4}} \cdot \biggl(\,\prod\limits^{n-1}_{s = 4} (s-1)!^{\binom{n-s}{0}}\biggr) \cdot (n-2)!\\
         &  =  & \frac{2^{\binom{n}{n-3}} \cdot 3^{\binom{n-3}{1}}}{(n-1)4} \cdot \bigl(4\sfac{3}\bigr)^{\binom{n-4}{n-4}} \cdot \biggl(\,\prod\limits^{n}_{s = 4} (s-1)!^{\binom{n-s}{0}}\biggr). 
\end{eqnarray*}

Finally, the bound on $\hc{n,k}$ for $n$, $k \ge 5$ comes from applying the induction hypothesis on both~$\hc{n-1,k}$ and $\hc{n,k-1}$:
\begin{eqnarray*}
\hc{n,k} & \geq & (n{-}2)(k{-}1)\frac{2^{\binom{n+k-5}{n-4}} \cdot 3^{\binom{n+k-8}{k-3}}}{(n{-}2)k}\,\prod\limits^{k}_{r = 4} \bigl(r\sfac{r-1}\bigr)^{\binom{n+k-5-r}{n-5}} \cdot \prod\limits^{n-1}_{s = 4} (s-1)!^{\binom{n+k-5-s}{k-4}} \\
         &      & \phantom{(n{-}2)(k{-}\!}\ \cdot \frac{2^{\binom{n+k-5}{n-3}} \cdot 3^{\binom{n+k-8}{k-4}}}{(n-1)(k-1)}\,\prod\limits^{k-1}_{r = 4} \bigl(r\sfac{r-1}\bigr)^{\binom{n+k-5-r}{n-4}} \cdot \prod\limits^{n}_{s = 4} (s-1)!^{\binom{n+k-5-s}{k-5}} \\
         &   =  & \frac{2^{\binom{n+k-4}{n-3}} \cdot 3^{\binom{n+k-7}{k-3}}}{(n-1)k}\, \biggl(\,\prod\limits^{k-1}_{r = 4} \bigl(r\sfac{r-1}\bigr)^{\binom{n+k-4-r}{n-4}}\biggr) \cdot \bigl(k\sfac{k-1}\bigr)^{\binom{n-5}{n-5}} \\
         &      & \phantom{\frac{2^{\binom{n+k-4}{n-3}} 3^{\binom{n+k-7}{k-3}}}{(n-1)k}\,} \cdot \biggl(\,\prod\limits^{n-1}_{s = 4} (s-1)!^{\binom{n+k-4-s}{k-4}}\biggr) \cdot (n-1)!^{\binom{k-5}{k-5}} \\
         &   =  & \frac{2^{\binom{n+k-4}{n-3}} \cdot 3^{\binom{n+k-7}{k-3}}}{(n-1)k}\,\prod\limits^{k}_{r = 4} \bigl(r\sfac{r-1}\bigr)^{\binom{n+k-4-r}{n-4}} \cdot \prod\limits^{n}_{s = 4} (s-1)!^{\binom{n+k-4-s}{k-4}}.
\end{eqnarray*}
This completes the proof of the theorem.
\end{proof}

The following corollary follows from mathematical manipulations on the right side of the inequality given by
Theorem~\ref{thm:hc(n,k)} and it gives a more explicit and concise expression.

\begin{corollary}
For $n > k \ge 5$, 
$$\hc{n,k} > \prod\limits^{n}_{r=3} \sfac{r-1}^{\binom{n+k-5-r}{n-6}+\binom{n+k-4-r}{n-4}+\binom{n+k-5-r}{k-5}}.$$
\end{corollary}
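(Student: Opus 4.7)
My plan is to manipulate the right-hand side of the inequality of Theorem~\ref{thm:hc(n,k)} until every surviving factor is a power of some superfactorial $\sfac{r-1}$ with $3 \le r \le n$. The target exponent of $\sfac{r-1}$ is a sum of three binomial coefficients, which suggests three separate contributions, one from each ``type'' of factor appearing in Theorem~\ref{thm:hc(n,k)}: the explicit prefactor involving $2$ and $3$, the product over $r$, and the product over $s$.

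The key identities are consequences of $\sfac{r} = r!\,\sfac{r-1}$: namely $(s-1)! = \sfac{s-1}/\sfac{s-2}$ and $r \cdot \sfac{r-1} = r!\,\sfac{r-2} = \sfac{r}\sfac{r-2}/\sfac{r-1}$. Combined with Pascal's rule $\binom{m-1}{j-1} + \binom{m-1}{j} = \binom{m}{j}$, these let me telescope the two explicit products into powers of superfactorials.

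First, I would rewrite $\prod_{s=4}^{n}(s-1)!^{\binom{n+k-4-s}{k-4}}$ by splitting $(s-1)! = \sfac{s-1}/\sfac{s-2}$ and shifting indices; Pascal's rule then collapses the paired exponents into a single exponent $\binom{n+k-5-r}{k-5}$ for $\sfac{r-1}$, contributing the third summand in the target exponent. Second, I would rewrite $\prod_{r=4}^{k}(r\sfac{r-1})^{\binom{n+k-4-r}{n-4}}$ via $r\sfac{r-1} = \sfac{r}\sfac{r-2}/\sfac{r-1}$; after the three resulting index shifts and two applications of Pascal's rule, the exponent of $\sfac{r-1}$ consolidates into $\binom{n+k-4-r}{n-4} + \binom{n+k-5-r}{n-6}$, supplying the remaining two summands in the target.

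The main obstacle will be the bookkeeping at the boundary indices $r \in \{3, 4, k, k+1, \ldots, n\}$, where the index shifts fail to pair up and leave residual factors, in particular negative powers of $\sfac{2} = 2$ and $\sfac{3} = 12$. These residuals must be combined with the explicit prefactor $\frac{2^{\binom{n+k-4}{n-3}} \cdot 3^{\binom{n+k-7}{k-3}}}{(n-1)k}$. Because $n > k \ge 5$, the positive binomial exponents on $2$ and $3$ grow large enough to dominate both the negative boundary exponents and the denominator $(n-1)k$, so the full leftover factor is strictly greater than~$1$; this is what yields the strict inequality ``$>$'' in the statement. Verifying this last step, while only algebra, is where the argument could most easily go wrong.
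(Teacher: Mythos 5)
Your proposal is correct and follows essentially the same route as the paper: the paper likewise telescopes the products via $\sfac{r}=r!\,\sfac{r-1}$ together with Pascal's rule, splits the prefactor $2^{\binom{n+k-4}{n-3}}\cdot 3^{\binom{n+k-7}{k-3}}$ to absorb the boundary residuals, and arrives at the exponents $\binom{n+k-5-r}{n-6}+\binom{n+k-4-r}{n-4}$ from the product over $r$ and $\binom{n+k-5-r}{k-5}$ from the product over $s$. The leftover factor you defer checking is exactly $2^{\binom{n+k-5}{n-4}}/\bigl((n-1)k\bigr)$, which is indeed greater than $1$ for $n>k\ge 5$, yielding the strict inequality.
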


\begin{proof}
We start proving two auxiliary equalities that shall be used to prove the corollary.
Firstly, 
\begin{align}
2^{\binom{n+k-4-2}{n-4}} \cdot 3^{\binom{n+k-4-3}{k-3}} \cdot \prod\limits^{k}_{r = 4} r^{\binom{n+k-4-r}{n-4}} 
        & =  \prod\limits^{k}_{r = 2}       r ^{\binom{n+k-4-r}{n-4}} \nonumber \\
        & =  \prod\limits^{k}_{r = 2}       r!^{\binom{n+k-4-r}{n-4}-\binom{n+k-4-(r+1)}{n-4}} \label{n_geq_5} \\
        & =  \prod\limits^{k}_{r = 2}       r!^{\binom{n+k-5-r}{n-5}} \nonumber \\
        & =  \prod\limits^{k}_{r = 2}   \sfac{r}^{\binom{n+k-5-r}{n-5}-\binom{n+k-5-(r+1)}{n-5}} \label{n_geq_6} \\
        & =  \prod\limits^{k}_{r = 2}   \sfac{r}^{\binom{n+k-6-r}{n-6}} \nonumber \\
        & =  \prod\limits^{k+1}_{r = 3} \sfac{r-1}^{\binom{n+k-6-(r-1)}{n-6}} \nonumber \\
        & =  \prod\limits^{n}_{r = 3}   \sfac{r-1}^{\binom{n+k-5-r}{n-6}}. \nonumber
\end{align}
Equalities \eqref{n_geq_5} and \eqref{n_geq_6} follow from the hypothesis that $n \geq 6$.

Secondly, 
\begin{align}
  2^{\binom{n+k-4-3}{k-4}} \cdot
        \prod\limits^{n}_{s = 4}      (s-1)!^{\binom{n+k-4-s}{k-4}} \nonumber 
    & = \prod\limits^{n}_{s = 3}      (s-1)!^{\binom{n+k-4-s}{k-4}} \nonumber \\
    & = \prod\limits^{n}_{s = 3} \sfac{s-1}^{\binom{n+k-4-s}{k-4}-\binom{n+k-4-(s+1)}{k-4}} \label{k_geq_5} \\
    & = \prod\limits^{n}_{s = 3} \sfac{s-1}^{\binom{n+k-5-s}{k-5}}. \nonumber 
\end{align}
Equality \eqref{k_geq_5} follows from the hypothesis that $k \geq 5$.

Thus, by Theorem~\ref{thm:hc(n,k)}, we have that
\begin{align}
\hc{n,k} & \ge  \frac{2^{\binom{n+k-4}{n-3}} \cdot 3^{\binom{n+k-7}{k-3}}}{(n-1)k}
                    \cdot \prod\limits^{k}_{r = 4} \bigl(r\sfac{r-1}\bigr)^{\binom{n+k-4-r}{n-4}} 
                    \cdot \prod\limits^{n}_{s = 4} (s-1)!^{\binom{n+k-4-s}{k-4}} \nonumber \\
         &  =   \frac{2^{\binom{n+k-4}{n-3}} \cdot 3^{\binom{n+k-7}{k-3}}}{(n-1)k}
                   \cdot \prod\limits^{k}_{r = 4} r^{\binom{n+k-4-r}{n-4}} \nonumber \\
         &    \phantom{=  \frac{2^{\binom{n+k-4}{n-3}} \cdot 3^{\binom{n+k-7}{k-3}}}{(n-1)k} \; }
                    \cdot \prod\limits^{k}_{r = 4} \sfac{r-1}^{\binom{n+k-4-r}{n-4}} \nonumber \\
         &    \phantom{=  \frac{2^{\binom{n+k-4}{n-3}} \cdot 3^{\binom{n+k-7}{k-3}}}{(n-1)k} \; }
                    \cdot \prod\limits^{n}_{s = 4} (s-1)!^{\binom{n+k-4-s}{k-4}} \nonumber \\
         &  = \frac{2^{\binom{n+k-5}{n-4}}}{(n-1)k}
                            \cdot \biggl(2^{\binom{n+k-6}{n-4}} \cdot 3^{\binom{n+k-7}{k-3}}\prod\limits^{k}_{r = 4} r^{\binom{n+k-4-r}{n-4}} \biggr) \label{eq_binomial}\\
         &    \phantom{= \frac{2^{\binom{n+k-5}{n-4}}}{(n-1)k}} \;
                           \cdot \biggl(2^{\binom{n+k-7}{n-4}}\prod\limits^{k}_{r = 4} \sfac{r-1}^{\binom{n+k-4-r}{n-4}}\biggr)\nonumber \\
         &    \phantom{= \frac{2^{\binom{n+k-5}{n-4}}}{(n-1)k}} \;
                            \cdot \biggl(2^{\binom{n+k-7}{n-3}} \cdot \prod\limits^{n}_{s = 4} (s-1)!^{\binom{n+k-4-s}{k-4}}\biggr) \nonumber \\
         &  =   \frac{2^{\binom{n+k-5}{n-4}}}{(n-1)k} \cdot 
                        \biggl(\,\prod\limits^{n}_{r = 3} \sfac{r-1}^{\binom{n+k-5-r}{n-6}}\biggr) \label{eq_previous} \\
         &    \phantom{= \frac{2^{\binom{n+k-5}{n-4}}}{(n-1)k}} \;            
                        \cdot \biggl(\,\prod\limits^{n}_{r = 3} \sfac{r-1}^{\binom{n+k-4-r}{n-4}}\biggr) \nonumber \\
         &    \phantom{= \frac{2^{\binom{n+k-5}{n-4}}}{(n-1)k}} \;
                        \cdot \biggl(\,\prod\limits^{n}_{r = 3} \sfac{r-1}^{\binom{n+k-5-r}{k-5}}\biggr) \nonumber \\ 
         &  =   \frac{2^{\binom{n+k-5}{n-4}}}{(n-1)k} \cdot \prod\limits^{n}_{r = 3} \sfac{r-1}^{\binom{n+k-5-r}{n-6}+\binom{n+k-4-r}{n-4}+\binom{n+k-5-r}{k-5}} \nonumber \\
         &  >   \prod\limits^{n}_{r = 3} \sfac{r-1}^{\binom{n+k-5-r}{n-6}+\binom{n+k-4-r}{n-4}+\binom{n+k-5-r}{k-5}}. \nonumber 
\end{align}%
Equality~\eqref{eq_binomial} holds because 
{\scriptsize $\binom{n+k-4}{n-3} = \binom{n+k-7}{n-3}+\binom{n+k-7}{n-4} + \binom{n+k-6}{n-4} + \binom{n+k-5}{n-4}$,}
and~\eqref{eq_previous} follows from the two previous equalities. 
\end{proof}



\section{Generalized Catalan matroids}\label{sec:latticepathmatroids}

In this section we address a special class of transversal matroids introduced by Bonin, de Mier, and Noy~\cite{BMN03}. We follow the description of Bonin and de Mier~\cite{BoMi06} and Stanley~\cite{St12}. 

Let $S$ be a subset of $\mathbb{Z}^d$.
A \defi{lattice path} $L$ in $\mathbb{Z}^d$ of length $k$ with steps in $S$ is a sequence $v_0, \ldots, v_k \in \mathbb{Z}^d$ such that each consecutive difference $s_j = v_j - v_{j-1}$ lies in~$S$.
We call $s_j$ the $j$th step of the lattice path $L$.
We say that $L$ starts at $v_0$ and ends at $v_k$, or simply that $L$ goes from $v_0$ to $v_k$.

All lattice paths we consider are in $\mathbb{Z}^2$, start
at~$(0,0)$ and end at $(m,r)$, and use steps in $S = \{(1,0),
(0,1)\}$.
We call the steps $(1,0)$ and $(0,1)$ as \defi{East} ($E$) and
\defi{North}~($N$), respectively.
Sometimes it is convenient to represent a lattice path $L$ as a
sequence of steps;
that is, as a word of length $m+r$ on the alphabet~$\{E,N\}$;
other times, as a subset of $\{1, \ldots, m+r\} = [m{+}r]$, say $\{ j \colon\, \text{$j$th step of } L \text{ is } $\north$\}$.


Let $P$ and $Q$ be lattice paths from $(0,0)$ to $(m,r)$ with $P$
never going above~$Q$.
Let~$\calP$ be the set of all lattice paths from $(0,0)$ to
$(m,r)$ that go neither below~$P$ nor above $Q$.
For each~$i$ with $1 \leq i \leq r$, let $A_i$ be the set
$$A_i = \{j \colon\, \text{$j$th step is the $i$th North for some path in } \calP\}.$$ 
Observe that $A_1, \ldots,A_r$ are intervals $A_i = [a_i, b_i]$
in $[m{+}r]$.
Moreover $a_1 < \cdots < a_r$ and $b_1 < \cdots < b_r$; and $a_i$
and $b_i$ correspond to the positions of the $i$th North step of~$Q$
and~$P$, respectively.
An example is shown in Figure~\ref{fig:latticepaths}.

\begin{figure}[h]
\centering
\includegraphics[width=0.95\linewidth]{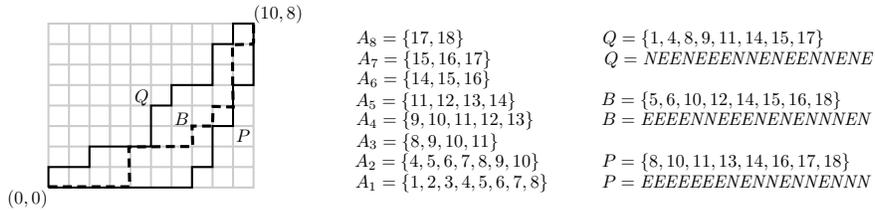}
\caption{Lattice paths $P$ and $Q$ from $(0,0)$ to $(10,8)$ and the corresponding sets $A_1,\ldots,A_8$. 
Representations of $P$ and $Q$ as words of length $10 + 8$ in the alphabet $\{E,N\}$ and as subsets of $[10+8]$.
Lattice path~$B$ goes neither below $P$ nor above $Q$ and its representations as a word and as a subset.}
\label{fig:latticepaths}
\end{figure}

Let $M[P,Q]$ be the transversal matroid on the ground set
$[m{+}r]$ and $(A_1,\ldots,A_r)$ its presentation.
We call $(A_1,\ldots,A_r)$ the \emph{standard presentation}
of~$M[P,Q]$.
Note that~$M[P,Q]$ has rank $r$ and corank (or nullity) $m$.  A
transversal matroid is a \emph{lattice path matroid} if it is
a matroid of the type $M[P,Q]$.
Each basis of $M[P,Q]$ corresponds to a lattice path from $(0,0)$
to $(m,r)$ that goes neither below~$P$ nor above $Q$.
Figure~\ref{fig:latticepaths} shows an illustration of a matroid
$M[P,Q]$ and a basis~$B$.

Let $M[P,Q]$ be a lattice path matroid.
Let $P = y_1 \cdots y_i \cdots y_{m+r}$ ($= y^{[m{+}r]}$) and ${Q
= x_1 \cdots x_i \cdots x_{m+r}}$ ($= x^{[m{+}r]}$), with $x_i,
y_i \in \{\north, \east \}$ for $i\in [m{+}r]$.
A \defi{generalized Catalan matroid} is a lattice path matroid
$M[P,Q]$, where $P = \mathit{E}^m\mathit{N}^r$.
We simply write $M[Q]$ for generalized Catalan matroids.
The class of generalized Catalan matroid is
minor-closed~\cite[Theorem~4.2]{BoMi06}.
The \defi{$k$-Catalan matroid} is the generalized Catalan
matroid $M[(\north \east)^{k}]$; that is, $Q = (\mathit{NE})^k$.


\begin{lemma}\label{lem:C4-latticepath}
Let $M[Q]$ be a generalized Catalan matroid of rank $r$ and
corank $m$, for $m \ge r \ge 2$, with neither a loop nor an isthmus.
Then, every edge of $\BG(M[Q])$ is in~${r-1}$ good cycles. 
\end{lemma}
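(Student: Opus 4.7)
The plan is to produce, for each $f \in B_1 - e$, at least one good cycle in $\SGC(f)$. Since $|B_1 - e| = r-1$ and the sets $\SGC(f)$ are pairwise disjoint by Remark~\ref{rem:disjointness}, this yields the claimed $r-1$ distinct good cycles.

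I would first translate everything into the lattice-path representation: identify each basis with the set of positions of its North steps, so that a subset $\{j_1 < \cdots < j_r\} \subseteq [m+r]$ is a basis of $M[Q]$ iff $j_k \ge a_k$ for every $k$, where $a_1 < \cdots < a_r$ are the positions of the North steps of $Q$. The no-loop hypothesis forces $a_1 = 1$, and the no-isthmus hypothesis guarantees that every element of $[m+r]$ is missing from some basis, which is what will provide the flexibility to perform swaps. Writing $B_1 = \{i_1 < \cdots < i_r\}$ with $e = i_p$ and $f = i_q$ for $q \ne p$, my target is an element $w \notin B_1 + g$ with $w \ne f$ such that $B_4 = B_1 - f + w$ is a basis and one of the following completions works: \emph{(i)} $B_3 = B_2 - f + w$ (so $B_3 = B_4 - e + g$), analogous to Fact~\ref{fact:XY-Z}, or \emph{(ii)} $B_3 = B_2 - g + w$ (so $B_3 = B_4 - e + f$), analogous to Fact~\ref{fact:XZ}. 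If $w$ is chosen in the interval $(i_{q-1}, i_{q+1})$ the swap $f \to w$ is local, the sorted order of $B_4$ changes only at position $q$, and the basis check for $B_4$ reduces to the single inequality $w \ge a_q$; an analogous local analysis handles $B_3$.

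The main obstacle is showing that for every $f$ at least one of the constructions \emph{(i)}, \emph{(ii)} survives. A small example, $B_1 = \{3,4\}$ and $B_2 = \{1,3\}$ inside $M[NENE]$, shows that each of the two constructions is sometimes necessary, so I would split into cases by the relative sorted positions of $e, f, g$. When $g$ lies in the interval $(i_{p-1}, i_{p+1})$, the sorted structure of $B_2$ matches that of $B_1$ outside position $p$, and construction \emph{(i)} succeeds for every valid $w$ in a slack interval around $f$ of the form $[\max(i_{q-1}+1, a_q),\, i_{q+1}-1] \setminus \{f, g, e\}$. When $g$ falls outside that interval, the sorted order of $B_2$ shifts and construction \emph{(i)} can fail; here I appeal to \emph{(ii)}, whose basis test for $B_3$ is governed by $a_p$ rather than $a_q$ and can be verified using the fact that $B_2$ is already a basis. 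The hypotheses $m \ge r \ge 2$ together with no-loop and no-isthmus guarantee that after removing the forbidden values $\{f, g, e\}$ the relevant slack interval remains nonempty; this existence check, together with the careful bookkeeping of lattice-path constraints at the positions affected by both swaps $f \to w$ and $e \to g$, is where I expect most of the technical work to lie. Once done, Remark~\ref{rem:disjointness} packages the $r-1$ good cycles into the conclusion.
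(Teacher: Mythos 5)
Your two constructions do not exhaust the good cycles in $\SGC(f)$, and the missing type is sometimes the only one available, so the per-$f$ count of $r-1$ cannot be completed as written. A short case analysis of which $4$-cycles $\Ba\Bb\Bc\Bd$ with $f\notin\Bd$ can exist shows there are exactly three shapes: your \emph{(i)} ($\Bd=\Ba-f+w$, $\Bc=\Bb-f+w$), your \emph{(ii)} ($\Bd=\Ba-f+w$, $\Bc=\Bb-g+w$), and a third shape in which $g$ itself enters $\Bd$, namely $\Bd=\Ba-f+g$ and $\Bc=\Bb-f+h$ for some common East position $h$ (this is the analogue of Fact~\ref{fact:XY}, and it is what the paper's proof uses for the blocks of North steps sitting at or after position $g$). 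Since you insist on $w\notin\Ba+g$, your constructions can never produce this third shape. Concretely, take the $3$-Catalan matroid $M[(\north\east)^3]$, so a set $\{j_1<j_2<j_3\}$ is a basis iff $j_1\ge 1$, $j_2\ge 3$, $j_3\ge 5$. Let $\Ba=\{2,3,6\}$ and $\Bb=\{3,5,6\}$, so $e=2$, $g=5$, and take $f=6$. The only candidates are $w\in\{1,4\}$, and in both cases $\Ba-f+w$ has all elements at most $4<5$, hence is not a basis; so both \emph{(i)} and \emph{(ii)} fail for every admissible $w$, while the third shape does give the good cycle with $\Bd=\{2,3,5\}$ and $\Bc=\{3,4,5\}$. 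Your proposal would thus deliver only one good cycle for this edge instead of $r-1=2$.

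Beyond this gap, note that your organization also diverges from the paper's in a way you would need to justify: the paper does not always argue per $f$. In its first case (a common North step before $e$) it fixes a single $f$ and varies $w$ over the $m-1$ common East steps, using disjointness of the sets $\SGC(f,w)$ rather than of the sets $\SGC(f)$; only its other cases are organized per common North step, and those are precisely the cases that force the third cycle shape above. So to repair your argument you must either add the third construction $\Bd=\Ba-f+g$, $\Bc=\Bb-f+h$ (and prove that for every $f$ at least one of the three shapes is realizable --- which is essentially the paper's block analysis), or abandon the strict one-cycle-per-$f$ bookkeeping and count within a single $\SGC(f)$ as the paper does in its first case.
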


\begin{proof}
As $M[Q]$ has neither a loop nor an isthmus, the first step of $Q$ is
North and the last one is East.
By convenience, we consider the bases of $\MQ$ as words of length $m+r$ in the
alphabet $\{\north, \east\}$.

Let $\Ba\Bb$ be an edge of $\BGMQ$, say $\Bb = \Ba - e+ g$.
Thus, 
$\Ba = x^{[m{+}r]}$, 
$\Bb = y^{[m{+}r]}$, 
and there exist indices $e$ and $g$ such that $x_e = y_g =
\north$, $x_g = y_e = \east$, and $x_{\ell} = y_{\ell}$ for $\ell
\neq e, g$.
Without loss of generality we may assume that~$e < g$.

\setcounter{case}{0}
\begin{case}
There exists an  index $\ell$ less than $e$ (and therefore less
than $g$) such that~$x_{\ell} = y_{\ell} = N$ (Figure~\ref{fig:latticeCaso1}).
\end{case}

Let $f$ be the least index such that $x_f = y_f = N$.
For every index $w$ such that $x_w = y_w =\east$, basis $\Bd$
rises by switching $x_w$ for $\north$ and $x_f$ for $\east$ in
$\Ba$ and basis $\Bc$ rises by switching $y_w$ for $\north$ and
$y_f$ for $\east$ in $\Bb$; that is, $\Bd = \Ba - f + w$ and $\Bc
= \Bb - f + w$.
Since the first step of $Q$ is North and the last one is East,
the paths corresponding to the words $\Bc$ and $\Bd$,
respectively, are in $\MQ$.
Thus, for every common $\east$ step of $\Ba$ and $\Bb$, we obtain
a good cycle~$\GC$.
Therefore, there are $m-1$ good~$\GC$ passing through the edge
$\Ba\Bb$.

\begin{figure}[h]
\centering
\includegraphics[width=0.95\linewidth]{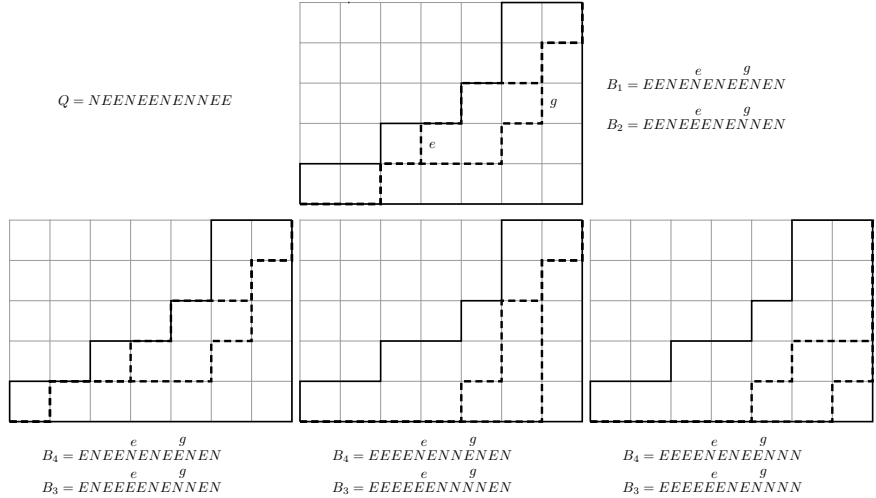}
\caption{Illustration of lattice paths corresponding to Case 1.}
\label{fig:latticeCaso1}
\end{figure}

\begin{case}
There exists an index $\ell$ greater that $g$ (and therefore
greater than $e$) such that $x_{\ell} = y_{\ell} = \east$.
\end{case}

Let $w$ be the last index such that $x_w = y_w = \east$.
For every index $f$ such that $x_f = y_f =\north$,  basis $\Bd$
rises by switching $x_f$ for $\east$ and $x_w$ for $\north$ in
$\Ba$ and basis $\Bc$ rises by switching $y_f$ for $\east$ and
$y_w$ for $\north$ in $\Bb$; that is,  $\Bd = \Ba - f + w$ and $\Bc
= \Bb - f + w$.
Since the first step of $Q$ is North and the last one is East,
the paths corresponding to the words $\Bc$ and $\Bd$,
respectively, are in $\MQ$.
Thus, for every common $\north$ step of $\Ba$ and $\Bb$, we obtain
a good cycle~$\GC$.
Therefore, there are $r-1$ good~$\GC$ passing through the edge
$\Ba\Bb$.

\begin{case}
There exist no indices $\ell$ and $\ell'$ with $\ell < e$  and
$\ell' > g$ such that~$x_{\ell} = y_{\ell} = N$ and $x_{\ell'} =
y_{\ell'} = \east$.
\end{case}

Thus, $x_e$ is the first $\north$ in $\Ba$ and $x_g$ is the last
$\east$.
Let $x_h$ be the penultimate~$\east$ in~$\Ba$.
Such~$x_h$ exists because~$m \ge r \ge 2$.
As $y_g$ is $\north$ in $\Bb$, $y_h$ is the last $\east$ in
$\Bb$.

In order to count the number of good cycles, we partition the
$\north$'s in the words corresponding to the bases
$\Ba$ and $\Bb$ in maximal
\emph{blocks}, and for each $\north$ we shall show a good cycle
associated  with it.

{\bf Block of Type I.}
Consider the block $x_i \cdots x_{w-1}x_{w}$ such that $x_i =
\cdots = x_{w-1} = \north$ and $x_{w} = \east$ with $e < i < w < g$.

Also, we have that $y_i =
\cdots = y_{w-1} = \north$ and $y_{w} = \east$.
For every $f \in \{i, \ldots, w-1\}$, basis $\Bd$
rises by switching $x_f$ for $\east$ and $x_{w}$ for $\north$ in
$\Ba$ and basis $\Bc$ rises by switching $y_f$ for $\east$ and
$y_{w}$ for $\north$ in $\Bb$; that is, $\Bd = \Ba - f + w$ and $\Bc
= \Bb - f + w$.

{\bf Block of Type II.}
Consider the block $x_i\cdots x_{w-1}x_w$ such that
$x_i = \cdots x_{w-1} = \north$ and $x_w = \east$
with $e < i < w = g$.

Also, we have that $y_i = \cdots = y_{w-1} = \north$.
Let $x_h$ the penultimate $\east$ in $\Ba$.
As $y_w = y_g$ is $\north$ in $\Bb$, $y_h$ is the last $\east$ in
$\Bb$.
For every $f \in \{i, \ldots, g-1\}$, basis $\Bd$
rises by switching $x_f$ for $\east$ and $x_g$ for $\north$ in
$\Ba$, and basis $\Bc$ rises by switching $y_f$ for $\east$ and
$y_h$ for $\north$ in $\Bb$; that is, $\Bd = \Ba - f + g$ and $\Bc
= \Bb - f + h$.

{\bf Block of Type III.}
Consider a block $x_{g+1}\cdots x_{m+r}$ of $\north$'s in $\Ba$.

Also, we have that $y_{g+1}\cdots y_{m+r}$ is a block of
$\north$'s in $\Bb$.
For every element $f~\in~\{{g+1}, \ldots, {m+r}\}$, basis $\Bd$
rises by switching $x_f$ for $\east$ and $x_g$ for $\north$ in
$\Ba$, and basis $\Bc$ rises by switching $y_f$ for $\east$ and
$y_h$ for $\north$ in $\Bb$; that is, $\Bd = \Ba - f + g$ and $\Bc
= \Bb - f + h$.

Since every $\north$ distinct of $x_e$ belongs to some type of
block, we get $r -1$ good~$\GC$ passing through the edge~$\Ba\Bb$. 
\end{proof}

Bonin and de Mier~\cite{BoMi06} observed that the class of all
generalized Catalan matroids is closed under duals.
Moreover, a basis $B^*$ of the dual of $M[P,Q]$ corresponds to
the $\east$ steps of the basis $B$ in $M[P,Q]$.
Therefore, the following is a consequence of this fact and
Lemma~\ref{lem:C4-latticepath}.

\begin{corollary}\label{cor:C4-latticepath}
  For $r, m \ge 2$, let $M[Q]$ be a generalized Catalan matroid of
  rank~$r$ and corank~$m$, with neither a loop nor an isthmus.
  Then every edge of $\BG(M[Q])$ is in $\min \{r-1, m-1\}$ good cycles.
\end{corollary}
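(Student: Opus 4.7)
The plan is to reduce to Lemma~\ref{lem:C4-latticepath} by splitting on the sign of $r-m$ and invoking matroid duality for the missing case.

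If $m \ge r \ge 2$, then $\min\{r-1, m-1\} = r-1$ and the conclusion is exactly Lemma~\ref{lem:C4-latticepath}. So I may assume $r \ge m \ge 2$ and need to show every edge of $\BGMQ$ lies in at least $m-1$ good cycles. The idea is to pass to the dual: by the Bonin--de Mier result recalled just above the corollary, $M[Q]^*$ is again a generalized Catalan matroid, say $M[Q']$, of rank $m$ and corank $r$. Loops and isthmuses interchange under duality, so $M[Q']$ also has neither a loop nor an isthmus. Since $r \ge m \ge 2$, Lemma~\ref{lem:C4-latticepath} applies to $M[Q']$ (with its rank $m$ and corank $r$ playing the roles of $r$ and $m$ in the lemma) and yields $m-1$ good cycles through every edge of $\BG(M[Q'])$.

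What remains is to pull these good cycles back to $\BGMQ$ via the canonical map $\Phi \colon \BGMQ \to \BG(M[Q]^*)$ defined by $B \mapsto E \setminus B$. This is a graph isomorphism because $\Ba^* \Delta \Bb^* = \Ba \Delta \Bb$, so adjacency is preserved. The key compatibility check is that $\Phi$ sends good cycles to good cycles. For this I would verify, by inspecting each of Facts~\ref{fact:XY-Z}, \ref{fact:YZ}, \ref{fact:XY}, and~\ref{fact:XZ}, that every good cycle $\Ba\Bb\Bc\Bd$ for $\Ba\Bb$ in $\BGMQ$ has the fully symmetric structure that there exist elements $e$ and $g$ with $e \in \Ba \cap \Bd$, $e \notin \Bb \cup \Bc$, $g \in \Bb \cap \Bc$, and $g \notin \Ba \cup \Bd$. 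In each of those facts, one has $\Bb = \Ba-e+g$, $\Bc = \Bd - e + g$ or $\Bc = \Bd - e + w$ for some $w \neq g$, and a direct reading of the tables confirms that $g$ lies in both $\Bb$ and $\Bc$ (and outside $\Ba$ and $\Bd$).

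Granting this symmetric characterization, complementation exchanges the roles of $e$ and $g$: the element $g$ now witnesses that $\Phi(\Ba)\Phi(\Bb)\Phi(\Bc)\Phi(\Bd)$ is a good cycle in $\BG(M[Q]^*)$ for the edge $\Phi(\Ba)\Phi(\Bb)$, with pivot $g$ in $\Phi(\Ba) \cap \Phi(\Bd)$. Thus $\Phi$ bijects the good cycles of an edge $\Ba\Bb$ with those of $\Phi(\Ba)\Phi(\Bb)$, and combining with the previous paragraph gives the required $m-1$ good cycles. The only place where real verification is required is the self-dual characterization of good cycles via Facts~\ref{fact:XY-Z}--\ref{fact:XZ}; everything else is a formal consequence of Lemma~\ref{lem:C4-latticepath} and of the closure of the class under duality.
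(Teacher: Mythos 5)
Your overall route is the paper's route: the authors also obtain the corollary purely from the closure of generalized Catalan matroids under duality, the fact that dual bases are complements, and Lemma~\ref{lem:C4-latticepath} applied to $M[Q]^*$ (they state it as an immediate consequence and give no further detail). So the reduction itself is fine. The problem is the one step you flag as "the only place where real verification is required": the claimed \emph{fully symmetric structure} of good cycles is false. Several of the paper's own constructions violate it. In the proof of Lemma~\ref{lem:C4-latticepath}, Blocks of Type II and III set $\Bd = \Ba - f + g$, so $g \in \Bd$; in the graphic-matroid facts you cite, Fact~\ref{fact:XY} and the second option of Fact~\ref{fact:YZ} likewise put $g$ into $\Bd$, and Fact~\ref{fact:XZ} sets $\Bc = \Bb - g + j$, so $g \notin \Bc$ (the tables in Figures~\ref{fig:EdgeYZ} and~\ref{fig:EdgeX-YZ} display $\Bd = \{e,g,\ldots\}$ and $\Bc=\{j,f,\ldots\}$ explicitly). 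Moreover, Facts~\ref{fact:XY-Z}--\ref{fact:XZ} concern graphic matroids and say nothing about the good cycles of $\BGMQ$; the relevant constructions are the ones inside Lemma~\ref{lem:C4-latticepath}. So the verification you propose would fail if carried out.

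Fortunately, no such verification is needed, because a good cycle is defined solely in terms of the pivot element, not of both exchanged elements. If $\Ba^*\Bb^*\Bc^*\Bd^*$ is a good cycle for $\Ba^*\Bb^*$ in $\BG(M[Q]^*)$ with pivot $g$ (the unique element of $\Ba^*\setminus\Bb^* = \Bb\setminus\Ba$), then by definition $g \in \Ba^*\cap\Bd^*$ and $g\notin\Bb^*\cup\Bc^*$; complementing negates membership, so in $\BGMQ$ one gets a four-cycle with $g\in\Bb\cap\Bc$ and $g\notin\Ba\cup\Bd$, i.e., a good cycle for the (unordered) edge $\Ba\Bb$ with the \emph{same} pivot $g$, now read from $\Bb$'s side -- the two sides of the four-cycle simply swap roles. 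This transfer is formal and requires no inspection of how the cycles were built. The only point worth recording is that the pivot of the transported good cycles is $g$ rather than $e$; since the edge is unordered, all $m-1$ transported cycles share this one pivot, and Observation~\ref{obs:Cat-minor} applies to deletion and contraction of either exchanged element, this is harmless for Proposition~\ref{prop:latticerecurrence} and Theorem~\ref{thm:latticepath}. With that replacement your argument is correct and coincides with the paper's.
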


Let $M[P,Q]$ be a lattice path matroid.
Let $P = y^{[m{+}r]}$ and $Q = x^{[m{+}r]}$, with $x_i, y_i \in
\{\north, \east \}$ for $i\in [m{+}r]$.
Assume $e$ is neither a loop nor an isthmus.
In~\cite{BoMi06} was observed that:
\begin{enumerate}[label=(\textrm{\arabic*})]
\item $M[P,Q] \setminus e$ is the lattice path matroid $M[P',Q']$
where the upper bounding path~$Q'$ is formed by deleting from $Q$
the first $\east$ step that is at or after step $e$; the lower
bounding path $P'$ is formed by deleting from $P$ the last
$\east$ step that is at or before step $x$.
\item $M[P,Q] / e$ is the lattice path matroid $M[P'',Q'']$ where
the upper bounding path $Q''$ is formed by deleting from $Q$ the
last $\north$ step that is at or before step $e$; the lower
bounding path $P''$ is formed by deleting from $P$ the first
$\north$ step that is at or after step $e$.
\end{enumerate}

\begin{obs}\label{obs:Cat-minor}
  If the $k$-Catalan matroid is a minor of the generalized Catalan
  matroid $M[Q]$, then for every element $e$ of $M[Q]$,
  the ${(k-1)}$-Catalan matroid is a minor of both the
  generalized Catalan matroid $M[Q] \backslash e$ and $M[Q] \slash e$.
\end{obs}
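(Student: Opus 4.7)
The plan is to translate the statement into a combinatorial fact about subsequences of the word $Q$. The key step is the characterization: \emph{the $k$-Catalan matroid is a minor of $M[Q]$ if and only if $(\north\east)^k$ appears as a subsequence of the word~$Q$.}

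For this characterization, I would use the two minor rules recalled just before the observation. Applied to a generalized Catalan matroid $M[Q]$ (so $P = \east^m \north^r$), they give $M[Q]\setminus e = M[Q']$, where $Q'$ is $Q$ with a single $\east$ step removed, and $M[Q]/e = M[Q'']$, where $Q''$ is $Q$ with a single $\north$ step removed; in both cases the lower bounding path remains of the form $\east^{m'}\north^{r'}$, so the result is again a generalized Catalan matroid. Iterating, any minor of $M[Q]$ is of the form $M[Q^*]$ for some subsequence $Q^*$ of $Q$. Conversely, given any target subsequence $Q^*$ of $Q$, one can excise the other characters one at a time: to remove an $\east$ at position $p$ choose $e=p$ and delete (the rule ``first $\east$ at or after $e$'' returns $p$ itself), and symmetrically an $\north$ at position $p$ is removed by contracting at $e=p$.

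With the characterization, the observation reduces to the following elementary lemma: if $(\north\east)^k$ is a subsequence of $Q$ and $Q'$ is obtained from $Q$ by deleting any single character, then $(\north\east)^{k-1}$ is a subsequence of $Q'$. To prove it, fix indices $i_1 < i_2 < \cdots < i_{2k}$ in $Q$ realizing a copy of $(\north\east)^k$, and let $p$ be the deleted position. If $p \notin \{i_1,\dots,i_{2k}\}$ the entire copy survives in $Q'$. Otherwise $p = i_j$ lies in a unique ``pair'' $(i_{2a-1}, i_{2a})$, and omitting that pair from the chosen indices leaves $k-1$ consecutive $\north\east$ pairs, which is exactly a copy of $(\north\east)^{k-1}$.

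Combining the two: since the $k$-Catalan matroid is a minor of $M[Q]$, $(\north\east)^k$ is a subsequence of $Q$; for any element $e$, both $M[Q]\setminus e$ and $M[Q]/e$ correspond to deleting a single character from $Q$; hence $(\north\east)^{k-1}$ is a subsequence of each resulting word, and the characterization gives that the $(k-1)$-Catalan matroid is a minor of both matroids. The main content of the proof is the characterization step --- one must read off from the minor rules that single-element operations on a generalized Catalan matroid correspond exactly to freely excising one step of the prescribed type from $Q$ --- but this is a direct verification, and the combinatorial lemma above is transparent.
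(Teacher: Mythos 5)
The paper gives no proof of this observation at all: it is stated bare, immediately after recalling the two minor rules for lattice path matroids, and is clearly meant to be read off from them. So your write-up is not competing with an argument in the paper but supplying the one the authors left implicit, and your route is exactly the natural formalization: deletion removes one $\east$ step from $Q$, contraction removes one $\north$ step from $Q$, and removing a single letter from a word containing $(\north\east)^k$ as a subsequence destroys at most one of the $k$ witnessing pairs. Your excision argument for the ``if'' direction of the characterization and the one-letter-deletion lemma are both correct (modulo the routine relabelling of the ground set after each single-element operation).

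Two points need attention. The more substantive one is in the ``only if'' direction of your characterization: from ``every minor of $M[Q]$ is $M[Q^*]$ for some subsequence $Q^*$ of $Q$'' and ``the $k$-Catalan matroid is a minor'' you conclude that $(\north\east)^k$ is a subsequence of $Q$, which tacitly assumes that $M[Q^*]\cong M[(\north\east)^k]$ forces $Q^*=(\north\east)^k$. This is true --- generalized Catalan (nested) matroids are determined by their upper path; alternatively, if $Q^*$ has $k$ $\north$'s and $k$ $\east$'s and its $i$th $\north$ sits at position $a_i$, the number of bases of $M[Q^*]$ equals the number of $k$-subsets $\{c_1<\cdots<c_k\}$ of $[2k]$ with $c_i\ge a_i$, which attains the Catalan value only when $a_i\le 2i-1$ for all $i$, and that inequality already yields $(\north\east)^k$ as a subsequence --- but as written this step is asserted, not proved, and it is the only place where the argument could genuinely fail without a supporting fact or citation. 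The second point is minor: the rules you invoke are stated in the paper only for $e$ neither a loop nor an isthmus, whereas the observation quantifies over all $e$ and your excision step applies the rules at arbitrary positions of $Q$; the degenerate cases are immediate (deletion and contraction of a loop or isthmus coincide), but a sentence disposing of them would make the argument airtight.
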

In fact, Observation~\ref{obs:Cat-minor} also holds if we replace
generalized Catalan matroid for lattice path matroid.

For the class of generalized Catalan matroids, we define the function

$$\hcl{k} = \min\{\HCs(M[Q]) \colon\, M[Q] \text{ has a $k$-Catalan matroid as a minor}\}.$$

\begin{proposition}\label{prop:latticerecurrence}
For $k \ge 2$, $\hcl{k} \ge (k-1)\hcl{k-1}^2$.
\end{proposition}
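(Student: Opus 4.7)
The plan is to mirror the general strategy of Section~\ref{sec:strategy}, with the good-cycle count supplied by Corollary~\ref{cor:C4-latticepath} and the inductive hypothesis propagated via Observation~\ref{obs:Cat-minor}. I would start by fixing a generalized Catalan matroid $M[Q]$ that attains the minimum in the definition of $\hcl{k}$, so $\HCs(M[Q]) = \hcl{k}$ and $M[Q]$ has a $k$-Catalan matroid as a minor. Before invoking Corollary~\ref{cor:C4-latticepath}, I need $M[Q]$ to have neither a loop nor an isthmus; I would handle this by a standard reduction, noting that a loop is in no basis and an isthmus is in every basis, so deleting a loop or contracting an isthmus leaves $\BG(M[Q])$ unchanged, preserves the minor-closed class of generalized Catalan matroids, and preserves the $k$-Catalan minor.

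Now I would pick an arbitrary edge $\Ba\Bb$ of $\BG(M[Q])$, writing $\Bb = \Ba - e + g$. Since the $k$-Catalan matroid has both rank and corank equal to $k$, and it is a minor of $M[Q]$, the rank $r$ and corank $m$ of $M[Q]$ both satisfy $r,m \ge k \ge 2$. Corollary~\ref{cor:C4-latticepath} then gives
\[
|\calC(\Ba,\Bb)| \;\ge\; \min\{r-1,\,m-1\} \;\ge\; k-1.
\]
By Observation~\ref{obs:Cat-minor}, both $M[Q]/e$ and $M[Q]\setminus e$ are generalized Catalan matroids that still contain a $(k-1)$-Catalan matroid as a minor, so by the definition of $\hcl{k-1}$ we have $\HCs(M[Q]/e) \ge \hcl{k-1}$ and $\HCs(M[Q]\setminus e) \ge \hcl{k-1}$.

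Finally, I would plug these three estimates into the general-strategy inequality from Section~\ref{sec:strategy}, which yields
\[
\HC_{\Ba\Bb}(M[Q]) \;\ge\; \HCs(M[Q]/e)\,\cdot\,|\calC(\Ba,\Bb)|\,\cdot\,\HCs(M[Q]\setminus e) \;\ge\; (k-1)\,\hcl{k-1}^{\,2}.
\]
Because $\Ba\Bb$ was arbitrary, taking the minimum over edges gives $\hcl{k} = \HCs(M[Q]) \ge (k-1)\hcl{k-1}^{\,2}$, as desired. The argument is essentially an assembly of pieces already in hand; the only step requiring a little care is the initial reduction to a matroid without loops or isthmuses, needed to legitimately apply Corollary~\ref{cor:C4-latticepath}, and this follows routinely from the minor-closedness of generalized Catalan matroids noted by Bonin and de Mier.
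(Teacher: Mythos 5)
Your proposal is correct and follows essentially the same argument as the paper's proof: fix a minimizer $M[Q]$ with neither a loop nor an isthmus, apply Corollary~\ref{cor:C4-latticepath} for the $k-1$ good cycles, use Observation~\ref{obs:Cat-minor} to bound $\HCs(M[Q]\setminus e)$ and $\HCs(M[Q]/e)$ by $\hcl{k-1}$, and combine via the general-strategy inequality. The only difference is that you spell out the justification for the loop/isthmus reduction, which the paper simply asserts with ``we may assume.''
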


\begin{proof}
Let $M[Q] = M_Q$ be a generalized Catalan matroid such that $\HCs(M_Q) = \hcl{k}$.
We may assume that $M_Q$ has neither a loop nor an isthmus.
Thus, both the rank and corank of $M$ are at least $k$.
By Corollary~\ref{cor:C4-latticepath}, there are $\min\{r-1,m-1\} \ge k-1$ good cycles for every edge of $\BG(M_Q)$.
Let $\Ba\Bb$ be an edge of $\BG(M_Q)$, say $\Ba = \Bb - e + g$, and let $M' = M_Q \backslash e$ and $M'' = M_Q \slash e$.
It follows from Observation~\ref{obs:Cat-minor} that both $M'$ and $M''$ contain a $(k-1)$-Catalan matroid as a minor.
Thus $\HCs(M')\ge \hcl{k-1}$ and $\HCs(M'') \ge \hcl{k-1}$.
Therefore we conclude that $\hcl{k} \ge (k-1)\hcl{k-1}^2$.
\end{proof}



\begin{theorem}\label{thm:latticepath}
For $k \ge 2$, $\hcl{k} \ge \sfac{k-1}\sfac{k-2}$.
\end{theorem}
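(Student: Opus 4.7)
I would prove the slightly stronger statement $\hcl{k} \ge 2\,\sfac{k-1}\sfac{k-2}$ by induction on $k \ge 2$, which trivially implies the theorem. The inductive step is driven by the recurrence $\hcl{k} \ge (k-1)\hcl{k-1}^2$ of Proposition~\ref{prop:latticerecurrence}; the superfactorial target is shaped to absorb this quadratic recurrence via the identity $\sfac{k-1} = (k-1)!\,\sfac{k-2}$.

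For the inductive step at $k \ge 3$, plugging $\hcl{k-1} \ge 2\,\sfac{k-2}\sfac{k-3}$ into the recurrence yields
\[
\hcl{k} \;\ge\; (k-1)\hcl{k-1}^2 \;\ge\; 4(k-1)\,\sfac{k-2}^2\,\sfac{k-3}^2,
\]
and comparing with $2\,\sfac{k-1}\sfac{k-2} = 2(k-1)!\,\sfac{k-2}^2$ reduces the step to the arithmetic inequality $2\,\sfac{k-3}^2 \ge (k-2)!$. This is immediate for $k \ge 6$, since $\sfac{k-3} \ge (k-3)!$ and $((k-3)!)^2 \ge (k-2)!$ whenever $(k-3)! \ge k-2$. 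For the remaining small values $k \in \{3, 4, 5\}$ it is a direct numerical check: $2\,\sfac{0}^2 = 2 \ge 1$, $2\,\sfac{1}^2 = 2 \ge 2$, and $2\,\sfac{2}^2 = 8 \ge 6$.

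The main obstacle is the strengthened base case $\hcl{2} \ge 2$. The $2$-Catalan matroid $M[(NE)^2]$ has only five bases (the lattice paths from $(0,0)$ to $(2,2)$ staying weakly between $P = EENN$ and $Q = NENE$), so one can enumerate its basis graph directly: it has eight edges and exactly four Hamiltonian cycles, and every edge lies in at least two of them, giving $\HCs = 2$ for this matroid. To conclude $\hcl{2} \ge 2$ one must additionally rule out a smaller value for any other generalized Catalan matroid containing the $2$-Catalan as a minor; this can be handled either by inspecting the short list of generalized Catalan matroids of small rank and corank with a $2$-Catalan minor, or by a descent argument using Corollary~\ref{cor:C4-latticepath} that reduces any such matroid to the $2$-Catalan itself. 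Once this base is in place, the stronger inductive hypothesis propagates uniformly through all $k \ge 3$ and delivers the bound of the theorem.
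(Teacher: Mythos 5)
Your inductive step is arithmetically sound, and the instinct to strengthen the induction hypothesis is well taken: the unstrengthened step (which is what the paper's own proof runs) needs $(k-1)\bigl(\sfac{k-2}\sfac{k-3}\bigr)^2 \ge \sfac{k-1}\sfac{k-2}$, equivalently $\sfac{k-3}^2 \ge (k-2)!$, and this actually fails for $k=4$ and $k=5$; your extra factor of $2$ turns it into $2\,\sfac{k-3}^2 \ge (k-2)!$, which does hold for all $k \ge 3$. Your computation for the $2$-Catalan matroid itself is also correct: its basis graph is $K_5$ minus two disjoint edges, it has exactly four Hamiltonian cycles, and each of its eight edges lies in at least two of them.

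The genuine gap is the strengthened base case $\hcl{2} \ge 2$, which you flag but do not close, and neither of your proposed repairs works as described. There is no ``short list'': $\hcl{2}$ is a minimum over the infinite family of all generalized Catalan matroids having a $2$-Catalan minor, of unbounded rank and corank. The descent via Corollary~\ref{cor:C4-latticepath} also stalls: for a matroid that is only guaranteed a $2$-Catalan minor, that corollary gives just $\min\{r-1,m-1\} \ge 1$ good cycle per edge, and after a single deletion or contraction Observation~\ref{obs:Cat-minor} leaves only a $1$-Catalan minor, so the two minors need not satisfy the hypotheses of the corollary at all (their basis graphs may even be $K_1$ or $K_2$). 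Composing one good cycle with one Hamiltonian cycle on each side yields only \emph{one} Hamiltonian cycle through the edge, so this route proves nothing beyond $\hcl{2} \ge 1$ for a general member of the class. The paper instead keeps the weak base $\hcl{2} \ge 1 = \sfac{1}\sfac{0}$ and runs the unstrengthened induction. To complete your argument you would need either a genuine proof that every edge of $\BGMQ$ lies in two Hamiltonian cycles whenever $\MQ$ has a $2$-Catalan minor, or, more cheaply, to keep the base $\hcl{2}\ge 1$, iterate the recurrence of Proposition~\ref{prop:latticerecurrence} numerically for the first few values of $k$, and only then switch to your strengthened induction hypothesis once the computed values exceed $2\,\sfac{k-1}\sfac{k-2}$.
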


\begin{proof}
The proof is by induction on $k$.
We write simply $M_Q$ instead $M[Q]$.
Let~$M_Q$ be a generalized Catalan matroid such that $\HCs(M_Q) = \hcl{k}$.
We may assume that $M_Q$ has neither a loop nor a isthmus.
In particular, $M_Q$ has both rank and corank at least $k$.
Let $k = 2$.
So $\BG(M_Q)$ has at least three vertices and is edge Hamiltonian.
Therefore $\hcl{2} \ge 1 = \sfac{1}\sfac{0}$.

Now let $k \ge 3$. 
Let $\Ba\Bb$ be an edge of $\BG(M_Q)$, say $\Bb = \Ba - e + g$.
By Corollary~\ref{cor:C4-latticepath}, the edge $\Ba\Bb$ is in 
$\min\{r-1,m-1\} \ge k-1$ good cycles.
Consider $M' = M_Q \backslash e$ and $M'' = M_Q \slash e$.
By Observation~\ref{obs:Cat-minor}, the $(k-1)$-Catalan matroid
is a minor of both $M'$ and $M''$.
Thus, by the induction hypothesis, $\HCs(M'), \HCs(M'') \ge \hcl{k-1} \ge \sfac{k-2}\sfac{k-3}$.
Hence, every edge of $\BG(M_Q)$ is in $(k-1)\bigl(\sfac{k-2}\sfac{k-3}\bigr)^2 \ge \sfac{k-1}\sfac{k-2}$ Hamiltonian cycles.
\end{proof}

\subsection{Uniform matroids}\label{sec:uniform}
 
Recall that the set of bases of the \emph{uniform matroid} of
rank~$r$ on~$n$
elements, denoted by $U_{r,n}$, consists of all
$r$-subsets of $[n]$.
Also, $U_{r,n}$ can be considered as the lattice path matroid $M[P,Q]$ where $Q = N^{r}E^{n-r}$ and $P = E^{n-r}N^{r}$.

Let $\Ba\Bb$ be an edge of $\BG(U_{r,n})$, say $\Ba = \Bb - e + g$.
So, we have that $\Ba = \{e, f_2, \ldots, f_r\}$ and $\Bb = \{g, f_2, \ldots, f_r\}$, with $f_i \in [n] \setminus \{e,g\}$ for $i \in \{2, \ldots, r\}$.
For every  $w$ in $[n] \setminus \{e,g,f_2, \ldots, f_r\}$,
we can obtain three types of good cycles in $\GC$ by 
replacing an $f_i$ by $w$ as shown in Table~\ref{tab:C4-uniform}.
We thus have the following result.

\begin{table}
  \caption{The three types of good cycles for $U_{r,n}$.}
  \label{tab:C4-uniform}
  \centering
  \begin{tabu} to\linewidth {XX}
    \begin{tabular}{|c|l l|c|}\hline
      $\Ba$ & $\{e,f_i,\ldots\}$ & $\{g,f_i,\ldots\}$ & $\Bb$\\
      $\Bd$ & $\{e,w,\ldots\}$ & $\{g,w,\ldots\}$ & $\Bc$\\
      \hline
    \end{tabular}
    &
    \begin{tabular}{|c|l l|c|}\hline
      $\Ba$ & $\{e,f_i,\ldots\}$ & $\{g,f_i,\ldots\}$ & $\Bb$\\
      $\Bd$ & $\{e,g,\ldots\}$ & $\{g,w,\ldots\}$ & $\Bc$\\
      \hline
    \end{tabular}
    \\[2mm]
    \multicolumn2{c}{
      \begin{tabular}{|c|l l|c|}\hline
        $\Ba$ & $\{e,f_i,\ldots\}$ & $\{g,f_i,\ldots\}$ & $\Bb$\\
        $\Bd$ & $\{e,w,\ldots\}$ & $\{w,f_i,\ldots\}$ & $\Bc$\\
        \hline
      \end{tabular}
    }
  \end{tabu}
\end{table}

\begin{proposition}\label{prop:C4-uniform}
  Let $n > r \geq 1$ be integers.
  Then every edge of $\BG(U_{r,n})$ is in $3(n-r-1)(r-1)$ good cycles.
\end{proposition}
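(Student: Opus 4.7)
The plan is to verify the bound directly from the three explicit constructions summarized in Table~\ref{tab:C4-uniform}. I will fix an edge $\Ba\Bb$ of $\BG(U_{r,n})$ with $\Bb=\Ba-e+g$ and write $\Ba=\{e,f_2,\ldots,f_r\}$. The parameters available are the index $f_i\in\{f_2,\ldots,f_r\}$, giving $r-1$ choices, and the element $w\in[n]\setminus(\Ba\cup\{g\})$, giving $n-r-1$ choices (because $|\Ba\cup\{g\}|=r+1$). So the product $3(r-1)(n-r-1)$ matches the cardinality of the parameter set (type,\,$f_i$,\,$w$) that I will map to good cycles.

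For each such parameter triple I will exhibit a good cycle $\Ba\Bb\Bc\Bd$ as prescribed by the corresponding row of Table~\ref{tab:C4-uniform}. Since every $r$-subset of $[n]$ is a basis of $U_{r,n}$, the only verifications needed are that the three symmetric differences $\Ba\Delta\Bd$, $\Bd\Delta\Bc$, $\Bc\Delta\Bb$ each have cardinality~$2$ and that $e\in\Ba\cap\Bd$ while $e\notin\Bb\cup\Bc$. Each case is a direct inspection; for instance, in Type~2 with $\Bd=\Ba-f_i+g$ and $\Bc=\Bb-f_i+w$, one sees $\Bd\Delta\Bc=\{e,w\}$ and $e\notin\Bc$.

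The real content of the argument is showing that the $3(r-1)(n-r-1)$ cycles thus produced are pairwise distinct. A good cycle for $\Ba\Bb$ is determined by the ordered pair $(\Bd,\Bc)$, so it suffices to distinguish those pairs. Within a single type, distinct parameters $(f_i,w)$ produce distinct $\Bd$ or $\Bc$, since $f_i$ is the unique element of $\Ba\setminus\Bd$ and $w$ appears explicitly in $\Bd$ or in $\Bc$. Across types, I will invoke two membership invariants of $(\Bd,\Bc)$: Type~2 is the only type with $g\in\Bd$, and among the remaining two, Type~1 has $g\in\Bc$ whereas Type~3 has $g\notin\Bc$. These invariants partition the three families, so the total count is exactly $3(r-1)(n-r-1)$.

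I do not expect any serious obstacle: once the table is read correctly, each good-cycle verification is a one-line check and the distinctness argument reduces to a single membership test per type (namely, whether $g$ lies in $\Bd$ and whether $g$ lies in $\Bc$).
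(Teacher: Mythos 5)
Your proposal is correct and follows essentially the same route as the paper, which likewise just exhibits the three families of good cycles of Table~\ref{tab:C4-uniform} parametrized by $f_i$ ($r-1$ choices) and $w$ ($n-r-1$ choices) and reads off the count. Your explicit pairwise-distinctness check via the membership of $g$ in $\Bd$ and in $\Bc$ is a welcome detail that the paper leaves implicit.
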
     

Finally, the next theorem can be proved by induction on the
number of elements of the matroid, applying
Proposition~\ref{prop:C4-uniform}, and following the same
strategy as above.


\begin{theorem}
    Let $n > r \geq 1$ be integers.
    Then every edge of $\BG(U_{r,n})$ is in 
    $\bigl((n-r-1)!(r-1)!\bigr)^{\min\{n-r-1, r-1\}}$ Hamiltonian cycles.
\label{thm:uniform}
\end{theorem}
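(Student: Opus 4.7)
The plan is to induct on $n$, following the same template as the proofs of Theorems~\ref{thm:hc(3,k)} and~\ref{thm:hc(n,3)}. Since $U_{r,n}$ and $U_{n-r,n}$ are dual matroids with isomorphic basis graphs, I may assume without loss of generality that $r \le n-r$, in which case $\min\{n-r-1, r-1\} = r-1$ and the target bound becomes $\bigl((n-r-1)!(r-1)!\bigr)^{r-1}$. The base of the induction is $r = 1$ (so $\min = 0$ and the bound reads $1$), which is delivered by Haff's edge Hamiltonicity once $\BG(U_{1,n}) = K_n$ has three or more vertices; the tiny cases $n \le 3$ are immediate by inspection.

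For the inductive step, fix an edge $\Ba\Bb$ of $\BG(U_{r,n})$ with $\Bb = \Ba - e + g$ and observe that $U_{r,n}/e = U_{r-1,n-1}$ and $U_{r,n}\setminus e = U_{r,n-1}$. By Proposition~\ref{prop:C4-uniform} there are $3(n-r-1)(r-1)$ good cycles for $\Ba\Bb$, so the general strategy of Section~\ref{sec:strategy} yields
\[
\HC_{\Ba\Bb}(U_{r,n}) \;\ge\; 3(n-r-1)(r-1)\,\HCs(U_{r-1,n-1})\,\HCs(U_{r,n-1}),
\]
into which I would substitute the inductive hypothesis, invoking duality once more on a smaller matroid if it falls outside the regime $r \le n-r$.

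The arithmetic verification splits into two cases. When $r < n-r$, the submatroid $U_{r,n-1}$ still lies in the $r \le n-r$ regime and contributes at least $\bigl((n-r-2)!(r-1)!\bigr)^{r-1}$, while $U_{r-1,n-1}$ contributes at least $\bigl((n-r-1)!(r-2)!\bigr)^{r-2}$; after dividing by the target, the inequality reduces to $3(r-1)\bigl((r-2)!(n-r-2)!\bigr)^{r-2} \ge 1$, which is immediate. The boundary case $n = 2r$ is the subtler one: duality must be applied to $U_{r,2r-1}$, and the exponent $\min$ drops simultaneously from $r-1$ to $r-2$ on both sides, leaving an inequality that collapses to $3\bigl((r-2)!\bigr)^{2r-6} \ge 1$. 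Keeping track of this boundary is the only genuine obstacle; everything else is mechanical substitution into the formula.
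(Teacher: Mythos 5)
Your proposal is correct and is essentially the proof the paper has in mind: the paper itself only sketches Theorem~\ref{thm:uniform} as ``induction on the number of elements, applying Proposition~\ref{prop:C4-uniform} and the same strategy as above,'' and your argument carries that sketch out, with the duality reduction and the two-case arithmetic check (including the boundary $n=2r$) filling in exactly the details the paper omits.
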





\bibliographystyle{siamplain}
\bibliography{CountingHCs}

\def\cprime{$'$} \def\polhk#1{\setbox0=\hbox{#1}{\ooalign{\hidewidth
  \lower1.5ex\hbox{`}\hidewidth\crcr\unhbox0}}}
\begin{thebibliography}{10}

\bibitem{BoIn76}
{\sc J.~A. Bondy and A.~W. Ingleton}, {\em Pancyclic graphs. {II}}, J.
  Combinatorial Theory Ser. B, 20 (1976), pp.~41--46.

\bibitem{BoMu08}
{\sc J.~A. Bondy and U.~S.~R. Murty}, {\em Graph theory}, vol.~244 of Graduate
  Texts in Mathematics, Springer, New York, 2008,
  \href{http://dx.doi.org/10.1007/978-1-84628-970-5}
  {doi:10.1007/978-1-84628-970-5}.

\bibitem{BoMi06}
{\sc J.~E. Bonin and A.~de~Mier}, {\em Lattice path matroids: structural
  properties}, European J. Combin., 27 (2006), pp.~701--738,
  \href{http://dx.doi.org/10.1016/j.ejc.2005.01.008}
  {doi:10.1016/j.ejc.2005.01.008}.

\bibitem{BMN03}
{\sc J.~E. Bonin, A.~de~Mier, and M.~Noy}, {\em Lattice path matroids:
  enumerative aspects and {T}utte polynomials}, J. Combin. Theory Ser. A, 104
  (2003), pp.~63--94, \href{http://dx.doi.org/10.1016/S0097-3165(03)00122-5}
  {doi:10.1016/S0097-3165(03)00122-5}.

\bibitem{ChRa11}
{\sc V.~Chatelain and J.~L. Ram{\'{\i}}rez~Alfons{\'{\i}}n}, {\em Matroid base
  polytope decomposition}, Adv. in Appl. Math., 47 (2011), pp.~158--172,
  \href{http://dx.doi.org/10.1016/j.aam.2010.04.005}
  {doi:10.1016/j.aam.2010.04.005}.

\bibitem{ChRa14}
{\sc V.~Chatelain and J.~L. Ram{\'{\i}}rez~Alfons{\'{\i}}n}, {\em Matroid base
  polytope decomposition {II}: {S}equences of hyperplane splits}, Adv. in Appl.
  Math., 54 (2014), pp.~121--136.

\bibitem{Cu66}
{\sc R.~L. Cummins}, {\em Hamilton circuits in tree graphs}, IEEE Trans.
  Circuit Theory, CT-13 (1966), pp.~82--90.

\bibitem{DHT77}
{\sc J.~D. Donald, C.~A. Holzmann, and M.~D. Tobey}, {\em A characterization of
  complete matroid base graphs}, J. Combinatorial Theory Ser. B, 22 (1977),
  pp.~139--158.

\bibitem{GeSe87}
{\sc I.~M. Gel{\cprime}fand and V.~V. Serganova}, {\em Combinatorial geometries
  and the strata of a torus on homogeneous compact manifolds}, Uspekhi Mat.
  Nauk, 42 (1987), pp.~107--134, 287.

\bibitem{Liu84}
{\sc G.~Z. Liu}, {\em The connectivity of matroid base graphs}, Chinese J.
  Oper. Res., 3 (1984), pp.~59--60.

\bibitem{Liu88b}
{\sc G.~Z. Liu}, {\em Erratum: ``{A} lower bound on connectivities of matroid
  base graphs''}, Discrete Math., 71 (1988), p.~187,
  \href{http://dx.doi.org/10.1016/0012-365X(88)90073-8}
  {doi:10.1016/0012-365X(88)90073-8}.

\bibitem{Liu88a}
{\sc G.~Z. Liu}, {\em A lower bound on connectivities of matroid base graphs},
  Discrete Math., 69 (1988), pp.~55--60,
  \href{http://dx.doi.org/10.1016/0012-365X(88)90177-X}
  {doi:10.1016/0012-365X(88)90177-X}.

\bibitem{Ma73a}
{\sc S.~B. Maurer}, {\em Matroid basis graphs. {I}}, J. Combinatorial Theory
  Ser. B, 14 (1973), pp.~216--240.

\bibitem{Ox11}
{\sc J.~Oxley}, {\em Matroid theory}, vol.~21 of Oxford Graduate Texts in
  Mathematics, Oxford University Press, Oxford, second~ed., 2011,
  \href{http://dx.doi.org/10.1093/acprof:oso/9780198566946.001.0001}
  {doi:10.1093/acprof:oso/9780198566946.001.0001}.

\bibitem{Sh68}
{\sc H.~S. Shank}, {\em A note on {H}amilton circuits in tree graphs}, IEEE
  Trans. Circuit Theory, CT-15 (1968), p.~86,
  \href{http://dx.doi.org/10.1109/TCT.1968.1082765}
  {doi:10.1109/TCT.1968.1082765}.

\bibitem{St12}
{\sc R.~P. Stanley}, {\em Enumerative combinatorics. {V}olume 1}, vol.~49 of
  Cambridge Studies in Advanced Mathematics, Cambridge University Press,
  Cambridge, second~ed., 2012.

\bibitem{We76}
{\sc D.~J.~A. Welsh}, {\em Matroid theory}, Academic Press [Harcourt Brace
  Jovanovich, Publishers], London-New York, 1976.
\newblock L. M. S. Monographs, No. 8.

\end{thebibliography}

\end{document}